\documentclass[11pt, leqno]{article}
\usepackage{amsmath,amssymb,amsthm, epsfig}
\usepackage{hyperref}
\usepackage{amsmath}
\usepackage[pdftex]{color}
\usepackage{mathtools}
\usepackage{color}
\usepackage{ulem}
\usepackage{mathrsfs}
\usepackage{wasysym}
\usepackage{stackrel}
\usepackage{pgfplots}
\pgfplotsset{compat=newest}
\usepackage{comment}

\title{\Large{\bf Hölder regularity up to the boundary for the g-Laplacian on Reifenberg flat domains}}
\author{\it by \smallskip \\ 
Alan Pio Sousa\footnote{\noindent Universidade Federal do Cear\'{a}. Departamento de Matem\'atica. Fortaleza - CE, Brazil. \noindent \texttt{E-mail address: \url{alanpio@ufc.br}}}
}

\newlength{\hchng}
\newlength{\vchng}
\newtheorem{theorem}{Theorem}[section]
\newtheorem{lemma}[theorem]{Lemma}
\newtheorem{proposition}[theorem]{Proposition}
\newtheorem{corollary}[theorem]{Corollary}
\theoremstyle{definition}

\newtheorem{definition}[theorem]{Definition}

\theoremstyle{remark}
\newtheorem{remark}[theorem]{Remark}

\numberwithin{equation}{section}

\newcommand{\defeq}{\mathrel{\mathop:}=}
\newcommand{\intav}[1]{\mathchoice {\mathop{\vrule width 6pt height 3 pt depth  -2.5pt
			\kern -8pt \intop}\nolimits_{\kern -6pt#1}} {\mathop{\vrule width
			5pt height 3  pt depth -2.6pt \kern -6pt \intop}\nolimits_{#1}}
	{\mathop{\vrule width 5pt height 3 pt depth -2.6pt \kern -6pt
			\intop}\nolimits_{#1}} {\mathop{\vrule width 5pt height 3 pt depth
			-2.6pt \kern -6pt \intop}\nolimits_{#1}}}

\begin{document}
\maketitle
	
\begin{abstract}
\noindent We establish boundary Hölder regularity for weak solutions to a class of nonlinear elliptic Dirichlet problems with nonstandard growth  posed on Reifenberg flat domains. More precisely, for any prescribed exponent \(\alpha\in(0,1)\), we show that weak solutions are \(\alpha\)-Hölder continuous up to the boundary provided that the Reifenberg flatness parameter is sufficiently small. The proof combines an iterative boundary decay argument with an ABP-type maximum principle in the Orlicz--Sobolev setting.
\medskip
\noindent\\ \textbf{Keywords}: Reifenberg flat domains; Holder regularity; Orlicz spaces; g-Laplacian.

\vspace{0.2cm}
\noindent\textbf{AMS Subject Classification: Primary 35B65, 35J87; Secondary 35R05, 46E35}
\end{abstract}

\section{Introduction and main results}
	
In this manuscript, we investigate boundary regularity for weak solutions to the quasilinear elliptic equation 
\begin{equation}\label{equ1}
    \left\{
    \begin{aligned}
        \Delta_g u &= f \quad \text{in } \Omega, \\
        u &= \psi \quad \text{on } \partial\Omega.
    \end{aligned}
    \right.
\end{equation}
where $\Omega\subset \mathbb{R}^n$ is a bounded domain.
The operator $\Delta_g$ is the so-called g-Laplace operator, defined by
$$\Delta_gu:=\operatorname{div}\left(\frac{g(|\nabla u|)}{|\nabla u|}\nabla u\right),$$
where $g:[0,\infty)\to [0,\infty)$ is the derivative of a Young function G, satisfying the following structural conditions:
\begin{itemize}
    \item {\rm (\textbf{PC})}\,\textit{Primitive condition:}
    $$G'(t)=g(t),\quad g\in C^0([0,+\infty)])\cap C^1((0,+\infty)) $$
    \item {\rm (\textbf{QC})}\,\textit{Quotient condition:}\, There exist positive constants $\delta_0\leq g_0$ such that
    $$ \delta_0\leq \frac{t\cdot g'(t)}{g(t)}\leq g_0,\quad\forall t>0. $$
\end{itemize}
Originally introduced by Lieberman in \cite{Lieb}, these assumptions offer a natural analytical framework for studying boundary regularity in singular or degenerate elliptic equations with non-standard growth. Clearly, this operator recovers the classical p-Laplacian by taking $g(t)=t^{p-1}$ with $\delta_{0}=g_{0}=p-1$, respectively. A different example consists of a function $g(t)=t^a\log(bt +c)$ with $a, b,c>0$ that satisfies {\bf (QC)} for $\delta_0=a$ and $g_0=a+1.$
Moreover, the class of functions satisfying the conditions introduced by Lieberman is broad, since any positive linear combination of functions satisfying \textbf{(PC)} and \textbf{(QC)} also satisfies these conditions. Furthermore, the product and composition of functions satisfying \textbf{(PC)} and \textbf{(QC)} also satisfy these conditions.

Boundary regularity for nonlinear PDEs has long been a central topic in analysis, particularly in elliptic and parabolic problems where the behavior of weak solutions near the boundary presents delicate analytical difficulties. In this context, the geometry of the domain and the regularity of the boundary data are crucial in determining the degree of regularity that can be achieved.

Under suitable regularity assumptions on the domain, weak solutions to the Dirichlet problem \eqref{equ1} are known to enjoy corresponding regularity properties; see, for instance, \cite{BraSou25}. In this paper we investigate boundary regularity under substantially weaker geometric assumptions, namely when the underlying domain is a Reifenberg flat domain. This means that for the domain $\Omega,$ at each boundary point and every scale the boundary of the domain is between two hyperplanes separated by a distance which depends on the scale.

The notion of Reifenberg-flat sets was first introduced by Reifenberg in 1960 \cite{Reif} while he was working on the Plateau problem. Since then, these sets have played an important role in the study of minimal surfaces. Over the past decades, a substantial body of work has been devoted to the study of elliptic and parabolic equations on Reifenberg flat domains. Interest in this setting has continued to grow, reflecting the importance of Reifenberg flatness as a natural geometric framework extending beyond the classical smooth-domain theory. Among the contributions in this direction are the works of Byun, Wang, Zhou and Yang \cite{By1, By2, By3}, Lemenant and Sire \cite{ Leme3}, and Milakis and Toro \cite{Mila} (see also the references therein), which address various regularity and qualitative properties of solutions in Reifenberg flat domains. 

More recently, the works \cite{Wu, Et, Lian} have introduced new approaches to the study of boundary regularity for solutions in Reifenberg flat domains. These developments have led to boundary regularity results, together with quantitative estimates, in a variety of settings, including the Laplace equation, linear elliptic equations in both divergence and non-divergence form, fully nonlinear elliptic equations, the \(p\)-Laplace equation, and the fractional Laplacian. Furthermore, in \cite{Prade}, sharp boundary regularity estimates were established for solutions to nonlocal elliptic equations associated with operators comparable to the fractional Laplacian in Reifenberg flat domains. Our goal is to extend that theory to the framework considered here. We establish an ABP-type estimate for the difference of weak solutions to \eqref{equ1}, which provides the main tool for obtaining the regularity results established in this paper. Our main operator, the \(g\)-Laplacian, displays either degenerate elliptic or singular behavior and, unlike the homogeneous \(p\)-Laplacian, lacks homogeneity. This absence of homogeneity creates additional challenges and forces us to develop new arguments at some stages of the analysis.

We now introduce some notation that establishes the framework for the main results. The first of these is a notion of  Reifenberg flat domain introduced in \cite{Et}.

\begin{definition}
  We say that $\Omega$ is $\delta-$Reifenberg flat from the exterior at $x_0\in\partial\Omega$ if there exists $r_0>0$ such that the following holds: for any $0<r<r_0,$ there exists a coordinate system $\{y_1,...,y_n\}$ (isometric to the original coordinate system) such that $x_0=0$ in this coordinate system and
  $$B_r(0)\cap\Omega\subset B_r(0)\cap\{y_n>-\delta r\}. $$
\end{definition}

\begin{remark}
 We say that $\Omega$ satisfies the exterior cone condition with
slope $\delta$ at $x_0\in\partial\Omega$ if there exist $r_0>0$ and a unit vector
$\mathbf{n}$ such that
\[
\left\{
x\in B(x_0,r_0): (x-x_0)\cdot\mathbf{n}
<-\delta|x-x_0|
\right\}
\subset \Omega^c.
\]
Clearly any domain satisfying the exterior cone condition with slope \(\delta\) is \(\delta\)-Reifenberg flat from the exterior. Consequently, all results established under the assumption that \(\Omega\) is \(\delta\)-Reifenberg flat from the exterior at 0 remain valid if this condition is replaced by the exterior cone condition at 0 with slope \(\delta\). In particular, these results apply whenever \(\partial\Omega\) is \(C^1\) at 0.
    
\end{remark}

We now introduce the appropriate spaces for the boundary data.

\begin{definition}
    Let $\Omega\subset\mathbb{R}^n$ be a bounded measurable set and $f\in L^\infty(\Omega).$ We say that $f$ is $C^\alpha\,(0<\alpha\leq 1)$ at $x_0\in \Omega$ or $f\in C^\alpha(x_0)$ if there exist $r_0>0$ and $K>0$ such that
    $$|f(x)-f(x_0)|\leq K|x-x_0|^\alpha,\quad \forall x\in \Omega\cap B_{r_0}(x_0).$$

  We say $f\in C_q^{-p,\alpha}$ at $x_0$ or $f\in C_q^{-p,\alpha}(x_0)$ where $p>0, q\geq 1, \alpha >0,$ if there exist $r_0>0$ and $K>0$ such that
   \begin{equation}\label{equ2}
       ||f||_{L^q(\Omega\cap B_r(x_0))} \leq Kr^{\alpha-p+n/q},\quad\forall\, 0<r<r_0.
   \end{equation}
  We set
   $$||f||_{C_q^{-p,\alpha}(x_0, r_0)}=\min\{K; \text{(\ref{equ2}) holds with}\,\, K \}. $$
 If $f\in C_q^{-p,\alpha}(x)$ for any $x\in \Omega$ with the same $r_0$ and 
  $$ \sup_{x\in \Omega}||f||_{C_q^{-p,\alpha}(x,r_0)}<+\infty, $$
    we say that $f\in C_q^{-p,\alpha}(\Omega).$
\end{definition}

Our first result concerns Hölder regularity up to the boundary for g-harmonic functions.

\begin{theorem}\label{thm1}
    Let $G$ be an N-function satisfying {\bf (PC)-(QC)} and $\Omega\subset \mathbb{R}^n$ a bounded domain. Furthermore, let $u\in W^{1,G}(\Omega)\cap L^\infty(\Omega)$ be a weak solution of 
     \[
     \begin{cases}
     \Delta_g u = 0, & \text{in } \Omega \cap B_1 \\[4pt]
      u = \psi, & \text{on } \partial\Omega \cap B_1
      \end{cases}
     \]
     where $\psi\in C^\alpha(0)$ for some $0<\alpha<1.$ Suppose that $\Omega$ is $\delta-$Reifenberg flat from the exterior at $0,$ for some $0<\delta<1$ depend on $\delta_0, g_0, \alpha$ and $n.$ Then $u$ is $C^\alpha$ at $0$ and for any $x\in \Omega\cap B_1,$
     $$|u(x)-u(0)| \leq C|x|^\alpha\left(||u||_{L^\infty(\Omega\cap B_1)}+ [\psi]_{C^\alpha(0)} \right), $$
     where $C>0$ depends on $n, g_0, \delta_0$ and $\alpha.$
\end{theorem}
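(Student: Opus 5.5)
The plan is to prove a one-step decay lemma and iterate it over dyadic scales. The first step is a normalization. The operator $\Delta_g$ is not homogeneous, so we cannot divide $u$ by $\|u\|_{L^\infty}+[\psi]_{C^\alpha(0)}$. Instead we track the oscillation directly. If $u$ solves $\Delta_g u=0$, then so does $u-c$ for any constant $c$. The rescaling $u_r(x)=u(rx)/M$ solves $\Delta_{g_{r,M}}u_r=0$ with $g_{r,M}(t)=g(Mt/r)$, and the quotient $tg_{r,M}'(t)/g_{r,M}(t)$ is unchanged. Hence the whole family $\{g_{r,M}\}$ satisfies \textbf{(QC)} with the same $\delta_0,g_0$. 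Every constant produced below must therefore depend only on $n,\delta_0,g_0$, and then the scaling is harmless.

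The key lemma reads as follows. Given $\alpha\in(0,1)$, there exist $\eta\in(0,1/4)$ and $\delta>0$, depending on $n,\delta_0,g_0,\alpha$, with this property. Suppose $\Omega\cap B_1\subset\{y_n>-\delta\}$, $v$ is $g$-harmonic in $\Omega\cap B_1$ with $\|v\|_{L^\infty(\Omega\cap B_1)}\le 1$, and $|v|\le \eta^{\alpha}/4$ on $\partial\Omega\cap B_1$ (after subtracting $\psi(0)$ and rescaling). Then $\|v\|_{L^\infty(\Omega\cap B_\eta)}\le \eta^\alpha$.

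To prove it, we compare $v^+$ with a barrier. Let $h$ be the $g$-harmonic function in the half-ball $B_1\cap\{y_n>-\delta\}$ with boundary values $1$ on the spherical part and $\eta^\alpha/4$ on the flat part. For $\delta=0$, standard boundary regularity for the $g$-Laplacian in flat (smooth) domains, e.g.\ \cite{BraSou25} or Lieberman's $C^{1,\beta}$ theory, gives a Lipschitz estimate at the flat boundary uniform in the \textbf{(QC)} class. Thus $h\le \eta^\alpha/4+C|y_n|$ near $0$, and hence $h\le \eta^\alpha/4+C\eta\le \eta^\alpha/2$ on $B_\eta$ once $\eta$ is small, using $\alpha<1$. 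For the shifted half-space with $\delta>0$, translation by $\delta e_n$ only costs an extra $C\delta$. The comparison principle for weak solutions, applied on $\Omega\cap B_1\subset B_1\cap\{y_n>-\delta\}$, then yields $v\le h+(\text{error})$, and symmetrically for $-v$.

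The main obstacle is the comparison on $\Omega\cap B_1$ itself. There $h$ is a valid supersolution only inside the shifted half-ball, and $v\le h$ must be checked on $\partial(\Omega\cap B_1)$. On $\partial\Omega\cap B_1$ this holds because $h\ge \eta^\alpha/4$ by the maximum principle. On $\partial B_1\cap\Omega$ it holds because $h=1$ there, after a slight enlargement of the ball to avoid corner issues. I would first try the plain weak comparison principle (testing with $(v-h)^+$, which is admissible by monotonicity of $\xi\mapsto g(|\xi|)\xi/|\xi|$). If the boundary traces are too irregular, I would instead use the paper's ABP-type estimate for differences of solutions to control $\sup(v-h)^+$. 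Choosing $\delta\le\eta^\alpha/(4C)$ then closes the lemma.

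Finally, we iterate. Set $M_k=\|u-u(0)\|_{L^\infty(\Omega\cap B_{\eta^k})}$; note that $u(0)=\psi(0)$ by the boundary continuity proved in the first step. Apply the lemma at scale $\eta^k$ to $(u-\psi(0))(\eta^k x)/A_k$ with $A_k=\eta^{k\alpha}C_0(\|u\|_\infty+[\psi]_{C^\alpha(0)})$. The Reifenberg condition at every scale $r<r_0$ supplies the flat configuration, and $[\psi]$ controls the boundary data: $|\psi-\psi(0)|\le[\psi]\eta^{k\alpha}\le \eta^{\alpha}A_k/4$ once $C_0\ge 4\eta^{-\alpha}$. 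Induction then gives $M_k\le A_k$. For $x\in B_{\eta^k}\setminus B_{\eta^{k+1}}$, this yields the stated estimate with $C=C_0\eta^{-\alpha}$. Scales $\ge r_0$ are absorbed into $C$ (after rescaling so that $r_0\ge1$).
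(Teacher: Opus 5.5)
Your proof is correct in outline, but it takes a genuinely different route from the paper at the barrier, which is the heart of the argument. The skeleton is the same as the paper's: a one-step decay obtained by comparison with a barrier on the half-ball supplied by exterior flatness, then a geometric iteration in which $\delta$ is tied to the contraction ratio and a large constant absorbs $[\psi]_{C^\alpha(0)}$.

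The paper uses the explicit function $MK\rho^{k\alpha}\bigl(2^{\gamma}-|x/r+(\delta+\frac12)e_n|^{-\gamma}\bigr)$. Its supersolution property follows from a short computation with the bounds $\lambda\,\frac{g(|p|)}{|p|}|\xi|^2\le a^{ij}\xi_i\xi_j\le\Lambda\,\frac{g(|p|)}{|p|}|\xi|^2$, once $\gamma+2\ge n\Lambda/\lambda$. This holds for every amplitude, centre and scale, so the inhomogeneity of $\Delta_g$ never enters. No rescaling of $g$ is needed, and the only analytic input is the weak comparison principle.

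You instead take the $g$-harmonic replacement in the half-ball and import a flat-boundary Lipschitz estimate from Lieberman's $C^{1,\beta}$ theory. This is heavier, and it requires that estimate uniformly over the rescaled family $g_{r,M}$. That uniformity does follow from the invariance of \textbf{(QC)} you point out, after normalizing $g(1)=1$ and using (g1). In exchange, your route bounds $h$ on all of $B_\eta$ at once, so no translated barriers are needed. It would also carry over to operators with no convenient explicit radial supersolution.

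A few small repairs are needed:
\begin{itemize}
\item Boundary data jumping from $1$ to $\eta^\alpha/4$ at the corner need not be a $W^{1,G}$ trace; it is not when $\delta_0\ge 1$. Enlarging the ball does not fix this, since then $h<1$ on $\partial B_1$. Instead prescribe continuous data equal to $1$ on the spherical part and to $\eta^\alpha/4$ on the flat part inside $B_{3/4}$.
\item Your fallback to the ABP-type estimate of Section 4 is unavailable here. It assumes $\delta_0>1$ and $g$ convex, which this theorem does not, but the plain comparison principle (as in the paper) suffices anyway.
\item Define $M_k$ with $\psi(0)$ in place of $u(0)$. Nothing earlier establishes $u(0)=\psi(0)$; your iteration is what proves it.
\end{itemize}
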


 For the next result, we assume that $1<\delta_0\leq g_0$ and that $g$ is convex. This last assumption parallels the degenerate setting of the homogeneous problem, which corresponds to the case $p\geq 2.$

\begin{theorem}\label{thm2}
    Let $G$ be an N-function satisfying {\bf (PC)-(QC)} with $1<\delta_0\leq g_0$ and $g$ convex function. Furthermore, let $\Omega\subset \mathbb{R}^n$ a bounded domain and $u\in W^{1,G}(\Omega)\cap L^\infty(\Omega)$ be a weak solution of  
    \[
    \begin{cases}
    \Delta_g u = f, & \text{in } \Omega \cap B_1 \\
    u = \psi, & \text{on } \partial\Omega \cap B_1
    \end{cases}
    \]
    
     where $\psi\in C^\alpha(0)$ and $f\in C_q^{-n/q,(1+g_0)\alpha}$ for some $0<\alpha<1$ and $q>n.$ Suppose that $\Omega$ is $\delta-$Reifenberg flat from the exterior at $0,$ for some $0<\delta<1$ depend on $\delta_0, g_0, \alpha$ and $n.$ Then $u$ is $C^\alpha$ at $0$ and for any $x\in \Omega\cap B_1,$
     $$|u(x)-u(0)| \leq C|x|^\alpha\left(||u||_{L^\infty(\Omega\cap B_1)}+ ||f||_{C_q^{-n/q,(1+g_0)\alpha}(0)}+ [\psi]_{C^\alpha(0)} +1 \right)^{\left(1+\frac{1+1/g_0}{1+\delta_0}\right)(1+g_0)^{-1}+1}, $$
     where $C>0$ depends on $n, g_0, \delta_0$ and $\alpha.$
\end{theorem}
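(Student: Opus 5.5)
Theorem \ref{thm2} will be proved by a perturbation of Theorem \ref{thm1}, combined with a boundary decay iteration at dyadic scales. The plan follows the scheme of \cite{Lian}, with the ABP-type estimate for differences of weak solutions as the comparison tool.

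\textbf{Step 1: normalization.} Set $M:=\|u\|_{L^\infty(\Omega\cap B_1)}+\|f\|_{C_q^{-n/q,(1+g_0)\alpha}(0)}+[\psi]_{C^\alpha(0)}+1$ and $v=u/M$. Since $\Delta_g$ is not homogeneous, $v$ solves $\Delta_{\tilde g}v=\tilde f$ with $\tilde g(t)=g(Mt)/g(M)$ (up to the normalization), $\tilde f=f/(Mg(M)/M)$ or similar. Condition \textbf{(QC)} is invariant under this rescaling, and convexity of $g$ is preserved, so $\tilde g$ satisfies the same hypotheses with the same $\delta_0,g_0$. By \textbf{(QC)}, $g(M)\ge g(1)M^{\delta_0}$, so the new data are bounded by a fixed power of $M$.

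\textbf{Step 2: key lemma.} This is the heart of the argument. There exists $\varepsilon_0>0$, depending on $n,\delta_0,g_0,\alpha$, such that the following holds. Suppose $\|v\|_{L^\infty}\le1$, $|\psi|\le \varepsilon_0$ on $\partial\Omega\cap B_1$, $\|f\|_{L^q(\Omega\cap B_1)}\le\varepsilon_0$, and $\Omega$ is $\delta$-flat. Then $\sup_{\Omega\cap B_\eta}|v|\le \eta^\alpha$ for a fixed $\eta$.

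To prove it, let $h$ be the $g$-harmonic replacement: $\Delta_g h=0$ in $\Omega\cap B_1$ with $h=v$ on $\partial(\Omega\cap B_1)$. The decay for $h$ follows from Theorem \ref{thm1}, or rather from its proof (barrier/flatness decay with exponent bigger than $\alpha$ for $\delta$ small), giving $\sup_{B_\eta}|h|\le \eta^{\alpha}/2$ once $\varepsilon_0$ is small. The difference $v-h$ vanishes on the boundary and $\Delta_g v-\Delta_g h=f$. The ABP-type estimate in the Orlicz setting should give $\sup|v-h|\le C\,\Phi(\|f\|_{L^q})$ for some function $\Phi$ vanishing at $0$. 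This is where $q>n$, $\delta_0>1$ and the convexity of $g$ enter: monotonicity of $\xi\mapsto g(|\xi|)\xi/|\xi|$ gives the lower bound $\bigl(g(|\xi|)\tfrac{\xi}{|\xi|}-g(|\zeta|)\tfrac{\zeta}{|\zeta|}\bigr)\cdot(\xi-\zeta)\gtrsim G(|\xi-\zeta|)$ in the degenerate regime. The power $\Phi(s)\sim s^{1/(1+\delta_0)}$-type is what later produces the exponent $\frac{1+1/g_0}{1+\delta_0}$. Choosing $\varepsilon_0$ so that $C\Phi(\varepsilon_0)\le\eta^\alpha/2$ proves the lemma.

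\textbf{Step 3: iteration.} Rescale by $v_k(x)=\eta^{-k\alpha}\bigl(v(\eta^k x)-v(0)\bigr)$ (after first subtracting $\psi(0)$). The rescaled equation involves $g_k(t)=g(\eta^{k(\alpha-1)}t)$, appropriately normalized, which again satisfies \textbf{(QC)} with the same constants. The right-hand side picks up a factor $\eta^{k}\cdot(\eta^{k(\alpha-1)})$ raised to powers between $\delta_0$ and $g_0$. The hypothesis $f\in C_q^{-n/q,(1+g_0)\alpha}$ is calibrated so that $\|f_k\|_{L^q(B_1)}\le\varepsilon_0$ uniformly in $k$: the worst case uses exponent $g_0$, giving $r^{(1+g_0)\alpha}$ scaling. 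The Reifenberg flatness is scale invariant, and $\psi\in C^\alpha(0)$ keeps boundary data small after rescaling. Induction then gives $\sup_{\Omega\cap B_{\eta^k}}|v-v(0)|\le C\eta^{k\alpha}$, which yields the pointwise $C^\alpha$ estimate.

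\textbf{Step 4: undoing the normalization.} Undoing Step 1 tracks the powers of $M$ lost in the rescaling, since small data are achieved only after dividing by a power of $M$ to reach $\varepsilon_0$. This yields the stated exponent.

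\textbf{Main obstacle.} I expect the ABP-type comparison estimate in Step 2, and making it uniform under the non-homogeneous rescalings, to be the hardest part. I would first try to prove it via a Moser/De Giorgi truncation on $(v-h-k)_+$, using the convexity-based monotonicity inequality and Orlicz–Sobolev embedding with $q>n$.
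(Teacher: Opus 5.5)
Your argument works in outline, but it takes a different route from the paper. The paper never rescales the operator. At each scale $r=\rho^k/2$ it compares $u$ directly with the explicit supersolution barrier $\tilde h$ from the proof of Theorem~\ref{thm1} on $\tilde\Omega_r$, and uses Proposition~\ref{abp} (with $f_1=f$, $f_2=0$) to absorb the right-hand side, since $u-\tilde h$ satisfies no equation. That ABP bound is a nonhomogeneous power of $\|f\|_{L^q(\tilde\Omega_r)}$, and its constant is only bounded in $r$, not scaled with it. So the decay $\|f\|_{L^q(\Omega\cap B_r)}\le Kr^{(1+g_0)\alpha}$ is exactly what makes the $f$-term $O(r^\alpha)$, and the powers of $K$ it generates produce the exponent $\bigl(1+\frac{1+1/g_0}{1+\delta_0}\bigr)(1+g_0)^{-1}+1$.

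You instead run a perturbation scheme: $g$-harmonic replacement, one-step barrier decay for $h$, and Proposition~\ref{abp} for $v-h$. Your De Giorgi truncation plan for the latter is exactly how the paper proves that proposition. You iterate this along the normalized operators $g_k(t)=\tilde g(s_kt)/\tilde g(s_k)$. Since \textbf{(QC)}, convexity and $g_k(1)=1$ are preserved, the ABP constants, which depend on $G(1)$, $\tilde G(1)$ and the diameter, stay uniform. The price is needing the replacement to exist and checking uniformity over the family $g_k$. What it buys is a better estimate than the paper's.

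Two corrections to your bookkeeping follow from this.

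First, Step 1 must divide by $M/\varepsilon_0$, not by $M$, to get the $\varepsilon_0$-small $\psi$ and $f$ your key lemma requires. Since $g(M/\varepsilon_0)\ge g(1)(M/\varepsilon_0)^{\delta_0}$ and $\delta_0>1$, this gives $\|\tilde f\|_{L^q}\le\varepsilon_0^{\delta_0}/g(1)$. The dependence on $g(1)$ is one the paper also carries through $G(1)$. Step 4 then reduces to $|u(x)-u(0)|\le C\varepsilon_0^{-1}M|x|^\alpha$, which implies the stated power since $M\ge 1$. No tracking of powers of $M$ through $\Phi$ is needed.

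Second, your account of where the hypothesis on $f$ enters is wrong. Under your rescaling the factor is $\eta^{k(1-n/q)}/\tilde g(\eta^{-k(1-\alpha)})\le \eta^{k(1-n/q+(1-\alpha)\delta_0)}\le 1$. So the worst case is the exponent $\delta_0$, not $g_0$, and $\|f_k\|_{L^q(B_1)}\le\|\tilde f\|_{L^q(B_1)}$ holds without using any decay of $f$. The $(1+g_0)\alpha$-decay is what the paper's unscaled comparison needs, not yours.
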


The global Hölder regularity now follows by combining the boundary estimates obtained above with the interior regularity theory developed in \cite{Lieb}. As the argument is standard, we refrain from providing the details.
\begin{corollary}
Assume that \(\Omega\) is \(\delta\)-Reifenberg flat from the exterior at every point \(x_0\in\partial\Omega\), and that \(\psi\in C^\alpha(\partial\Omega)\). Let \(u\) be a weak solution of \eqref{equ1} corresponding to an N-function G satisfying the assumptions of either Theorem \ref{thm1} or Theorem \ref{thm2} for
$f\in C_q^{-n/q,(1+g_0)\alpha}(\Omega)$ with the same radius \(r_0\). Then
$u\in C^\beta(\overline{\Omega})$ for some \(\beta\leq \alpha\).
\end{corollary}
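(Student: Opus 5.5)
We prove the corollary by combining the pointwise boundary estimate of Theorem \ref{thm1} (or Theorem \ref{thm2}) with Lieberman's interior estimates from \cite{Lieb}, following the standard Campanato-type splitting. Since \(\Omega\) is \(\delta\)-Reifenberg flat from the exterior at every \(x_0\in\partial\Omega\) with the same \(r_0\), and the norms \([\psi]_{C^\alpha(x_0)}\) and \(\|f\|_{C_q^{-n/q,(1+g_0)\alpha}(x_0,r_0)}\) are bounded uniformly in \(x_0\), we first rescale and translate so that each boundary point sits at the origin with radius \(r_0\). Applying the theorem at every \(x_0\in\partial\Omega\) then gives
\[
|u(x)-u(x_0)|\le M|x-x_0|^\alpha,\qquad x\in\Omega\cap B_{r_0}(x_0),
\]
with \(M\) independent of \(x_0\). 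Here \(M\) depends on \(\|u\|_{L^\infty(\Omega)}\), \(\|\psi\|_{C^\alpha(\partial\Omega)}\) and the uniform \(C_q^{-n/q,(1+g_0)\alpha}\) norm of \(f\). For \(|x-x_0|\ge r_0\), the trivial bound \(2\|u\|_\infty\le 2\|u\|_\infty r_0^{-\alpha}|x-x_0|^\alpha\) covers the remaining case.

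The next ingredient is the interior estimate. For \(B_{2\rho}(x)\subset\Omega\), Lieberman's theory \cite{Lieb} gives
\[
[u]_{C^{\beta_0}(B_{\rho}(x))}\le C\rho^{-\beta_0}\bigl(\operatorname{osc}_{B_{2\rho}(x)}u+\text{data term}\bigr).
\]
This holds for some \(\beta_0\in(0,1)\) depending on \(n,\delta_0,g_0\), and the data term is a quantity scaling like \(\rho^{\gamma}\) built from \(f\in L^q\) with \(q>n\). To control the \(f\)-term we use the scaling property of the \(C_q^{-n/q,(1+g_0)\alpha}\) condition, which gives \(\|f\|_{L^q(B_{2\rho})}\lesssim \rho^{(1+g_0)\alpha}\). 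Together with the nonhomogeneous scaling \(v(y)=u(x+\rho y)/\rho^{\alpha}\), this shows that the rescaled equation is of the same \(g\)-Laplacian type, with a new function \(\tilde g\) satisfying \textbf{(QC)} with the same constants. The right-hand side of the rescaled equation stays bounded. We set \(\beta=\min\{\beta_0,\alpha\}\).

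The gluing step is the standard two-case argument. Take \(x,y\in\Omega\) and let \(d_x=\operatorname{dist}(x,\partial\Omega)\le d_y\), with \(x_0\in\partial\Omega\) a closest point to \(x\). If \(|x-y|\ge d_x/2\), then \(|y-x_0|\le 3|x-y|\), and the boundary estimate gives
\[
|u(x)-u(y)|\le |u(x)-u(x_0)|+|u(y)-u(x_0)|\le C M|x-y|^\alpha .
\]
If instead \(|x-y|<d_x/2\), then both points lie in \(B_{d_x/2}(x)\subset B_{d_x}(x)\subset\Omega\). By the boundary estimate, \(\operatorname{osc}_{B_{d_x}(x)}u\le CMd_x^\alpha\), and the interior estimate yields
\[
|u(x)-u(y)|\le C(M d_x^{\alpha}+\text{data})\,d_x^{-\beta}|x-y|^\beta\le C'|x-y|^\beta ,
\]
because \(\beta\le\alpha\) and \(d_x\) is bounded. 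Finally, continuity of \(u\) up to the boundary with boundary values \(\psi\) follows from the boundary estimate, so \(u\) extends to \(C^\beta(\overline\Omega)\).

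The main obstacle is making the interior estimate genuinely scale-invariant in the nonhomogeneous Orlicz setting. Lieberman's constants are uniform over all \(g\) satisfying \textbf{(QC)} with fixed \(\delta_0,g_0\), so the plan is to rescale \(g\) to \(\tilde g(t)=g(\rho^{\alpha-1}t)/g(\rho^{\alpha-1})\) together with \(u\), and check that the \(f\)-term remains bounded using the \((1+g_0)\alpha\) exponent in the decay condition on \(f\).
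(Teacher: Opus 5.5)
Your proposal is correct and is the standard argument the paper invokes without giving details: the pointwise boundary decay from Theorem \ref{thm1} or \ref{thm2} is applied uniformly at every $x_0\in\partial\Omega$ after a fixed rescaling by $r_0$, and then glued to Lieberman's interior estimates \cite{Lieb} through the two cases $|x-y|\ge d_x/2$ and $|x-y|<d_x/2$. The one point to keep explicit is that the data term in your displayed interior estimate must be $O(d_x^{\beta})$ for the final inequality to survive as $d_x\to0$; your normalized rescaling $\tilde g(t)=g(\rho^{\alpha-1}t)/g(\rho^{\alpha-1})$ with $v=(u(x+\rho\,\cdot)-u(x))/\rho^{\alpha}$ makes it $O(d_x^{\alpha})$, since for $\rho\le1$ the rescaled right-hand side satisfies $\|\tilde f\|_{L^q(B_2)}\le g(1)^{-1}\rho^{1-n/q+(1-\alpha)\delta_0}\|f\|_{L^q(\Omega)}$ (so in the interior $q>n$ already suffices, and the decay of $f$ is not needed there).
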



\section{Preliminary tools}\label{Sec2}

In this section, we collect some auxiliary results that will be used throughout the paper. 
Associated with the $N$-function $G$, we consider the Orlicz and Orlicz--Sobolev spaces.

\begin{definition}
Given an $N$-function $G$, we define the Orlicz space $L^{G}(\Omega)$ as the set of measurable functions $h$ in $\Omega$ such that
\[
\int_{\Omega}G(|h(x)|)\,dx<\infty,
\]
where we adopt the Luxemburg norm
\[
\|h\|_{L^{G}(\Omega)}\defeq\inf\left\{\lambda>0:\int_\Omega G\left(\frac{|h(x)|}{\lambda}\right)\,dx\leq 1\right\}.
\]
Associated with the Orlicz space, we define the Orlicz--Sobolev space $W^{1,G}(\Omega)$ as the set of all measurable functions $h$ such that $h$ and all its distributional derivatives $D_{i}h$, $i=1,\ldots,n$, belong to $L^{G}(\Omega)$. In this case, we endow $W^{1,G}(\Omega)$ with the norm
\[
\|h\|_{W^{1,G}(\Omega)}=\|h\|_{L^{G}(\Omega)}+\|\nabla h\|_{L^{G}(\Omega)}.
\]
\end{definition}

\begin{remark}
We observe that, under assumptions {\bf (PC)--(QC)}, $L^{G}(\Omega)$ and $W^{1,G}(\Omega)$ are reflexive Banach spaces (see for instance \cite{HarHas19}).
\end{remark}

\begin{definition}
A function $u\in W^{1,G}_{\mathrm{loc}}(\Omega)$ is called a weak supersolution
(respectively, subsolution) to $\Delta_gu=f$ if, for every nonnegative test
function $\phi\in C_0^\infty(\Omega)$, the following inequality holds:
\begin{equation}\label{test}
\int_\Omega \frac{g(|\nabla u|)}{|\nabla u|}\nabla u\cdot \nabla\phi\,dx
\geq
-\int_\Omega f\phi\,dx
\qquad
(\text{respectively, } \leq).
\end{equation}

We say that $u$ is a weak solution to $\Delta_gu=f$ if
\[
\int_\Omega \frac{g(|\nabla u|)}{|\nabla u|}\nabla u\cdot \nabla\phi\,dx
=
-\int_\Omega f\phi\,dx,
\]
for all $\phi\in C_0^\infty(\Omega)$. Whe $f\equiv0$ we say that $u$ is g-harmonic.
\end{definition}

\begin{remark}
    One can show by a density argument that test functions can be taken in $0\leq \phi\in W^{1,G}_c(\Omega),$ which denotes the functions in $W^{1,G}(\Omega)$ that have compact support (see \cite{BraMorei,HarHas19}).
\end{remark}

\begin{lemma}\label{behaviornorm}
If \(u\in L^{G}(\Omega)\), then there exists a positive constant \(\mathrm{C}=\mathrm{C}(\delta_{0},g_{0})\) such that 
\[
||u||_{L^G(\Omega)}\leq \mathrm{C}\max\left\{\left(\int_{\Omega}{G}(|u|)dx\right)^{\frac{1}{1+\delta_{0}}},\left(\int_{\Omega}{G}(|u|)dx\right)^{\frac{1}{1+g_{0}}}\right\}.
\]
\end{lemma}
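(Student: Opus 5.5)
The plan is to derive the statement from the two-sided power bounds for \(G\) that follow from {\bf (QC)}, combined with the definition of the Luxemburg norm. The first step is to integrate \(\delta_0 \le t g'(t)/g(t) \le g_0\) and use \(G'=g\) to obtain \(1+\delta_0 \le t g(t)/G(t) \le 1+g_0\) for \(t>0\). This is Lieberman's standard consequence: from \(g(t)=G'(t)\) and \(G(t)=\int_0^t g\), one shows that \(t\mapsto g(t)/t^{\delta_0}\) is nondecreasing and \(t\mapsto g(t)/t^{g_0}\) is nonincreasing, and then integrates. It follows that \(\log G\) has derivative between \((1+\delta_0)/t\) and \((1+g_0)/t\). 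Integrating this from \(t\) to \(\lambda t\) gives, for all \(t\ge 0\) and \(\lambda>0\),
\[
\min\{\lambda^{1+\delta_0},\lambda^{1+g_0}\}\,G(t)\le G(\lambda t)\le \max\{\lambda^{1+\delta_0},\lambda^{1+g_0}\}\,G(t).
\]

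Next, set \(I=\int_\Omega G(|u|)\,dx\). If \(I=0\), then \(u=0\) a.e. and the claim is trivial. Otherwise, I test the Luxemburg infimum with \(\lambda=\max\{I^{1/(1+\delta_0)},I^{1/(1+g_0)}\}\). Applying the lower bound above with the factor \(1/\lambda\) gives
\[
\int_\Omega G(|u|/\lambda)\,dx\le \max\{\lambda^{-(1+\delta_0)},\lambda^{-(1+g_0)}\}\, I .
\]
It remains to check that this is at most \(1\), splitting into two cases. If \(I\ge 1\), then \(\lambda=I^{1/(1+\delta_0)}\ge 1\), so \(\lambda^{-(1+\delta_0)}\) is the larger factor and the product equals \(1\). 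If \(I<1\), then \(\lambda=I^{1/(1+g_0)}<1\), so \(\lambda^{-(1+g_0)}\) dominates and again the product is \(1\). Hence \(\|u\|_{L^G}\le\lambda\), which is the claim with \(\mathrm C=1\); any constant \(\mathrm C(\delta_0,g_0)\ge 1\) therefore works.

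The only step requiring care is the first: deriving the power-type comparison for \(G\) from the bounds on \(g\). This needs \(g(0)=0\) and \(G(0)=0\), which hold since \(G\) is an N-function, together with integrability near \(0\). Both are guaranteed by \(g(t)\le g(1)t^{\delta_0}\) for \(t\le1\). The case bookkeeping in the second step is routine.
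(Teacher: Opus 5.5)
Your proof is correct. The paper gives no argument of its own here; it simply cites \cite[Lemma 2.3]{MarWol08}. Your proof is a self-contained version of the standard argument behind that citation: power-type scaling of $G$ derived from {\bf (QC)}, followed by testing the Luxemburg infimum at $\lambda=\max\{I^{1/(1+\delta_0)},I^{1/(1+g_0)}\}$.

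The one real difference is in the scaling step. You integrate $(\log G)'=g/G\in[(1+\delta_0)/t,(1+g_0)/t]$ to get the sharp bounds
$\min\{s^{1+\delta_0},s^{1+g_0}\}G(t)\le G(st)\le\max\{s^{1+\delta_0},s^{1+g_0}\}G(t)$.
The paper's (G1) carries extra factors $(1+g_0)^{\pm1}$. If you used (G1) as stated, you would have to enlarge $\lambda$ by the factor $(1+g_0)^{1/(1+\delta_0)}$, giving a constant $\mathrm{C}(\delta_0,g_0)$. Your route gives $\mathrm{C}=1$.

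There is one small wording slip. The estimate $\int_\Omega G(|u|/\lambda)\,dx\le\max\{\lambda^{-(1+\delta_0)},\lambda^{-(1+g_0)}\}\,I$ comes from the upper scaling bound with factor $1/\lambda$. Equivalently, it comes from the lower bound with factor $\lambda$ applied at $t=|u|/\lambda$. It does not come from the lower bound with factor $1/\lambda$, as you wrote, though the inequality itself and the case check that follows are right.
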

\begin{proof}
See \cite[Lemma 2.3]{MarWol08}
\end{proof}

In the next sections, we will need a Poincar\'e-type inequality in the framework of Orlicz--Sobolev spaces, which is summarized in the following result.
\begin{lemma}[\bf Poincar\'e-type inequality]\label{poincareinequality}
Let \(u\in W^{1,G}(\Omega)\) such that \(u=0\) on \(\partial \Omega\), then
\[
\int_{\Omega}{G}\left(\frac{|u|}{R}\right)\leq \int_{\Omega}G(|\nabla u|)\quad \text{for}\,\, R=\operatorname{diam}(\Omega).
\]
\end{lemma}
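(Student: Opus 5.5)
The plan is to reduce the Orlicz statement to the classical one-dimensional Poincaré argument, applied along lines in a fixed direction, and then pass from $|u|$ to $G$ by Jensen's inequality. First extend $u$ by zero outside $\Omega$. Since $u$ has zero trace, the extension $\tilde u$ belongs to $W^{1,G}(\mathbb{R}^n)$, with $\nabla\tilde u=\nabla u\,\chi_\Omega$. This uses the reflexivity and density facts recalled above: approximate $u$ by $C_c^\infty(\Omega)$ functions, prove the inequality for those, and pass to the limit. The limit passage uses the $\Delta_2$-condition, which follows from \textbf{(QC)}, so that modular convergence and norm convergence coincide. So it suffices to treat $u\in C_c^\infty(\Omega)$.

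\textbf{Pointwise bound along a line.} Fix $x\in\Omega$. Since $\Omega$ has diameter $R$, after choosing coordinates we may assume $\Omega$ lies in a slab $\{a<x_1<a+R\}$ of width $R$. For $u\in C_c^\infty(\Omega)$,
\[
|u(x)|=\Big|\int_a^{x_1}\partial_1 u(t,x')\,dt\Big|\leq \int_a^{a+R}|\nabla u(t,x')|\,dt,
\]
so that
\[
\frac{|u(x)|}{R}\le \frac1R\int_a^{a+R}|\nabla u(t,x')|\,dt .
\]
The right-hand side is an average over an interval of length $R$.

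\textbf{Jensen and Fubini.} Since $G$ is convex and increasing, Jensen's inequality gives
\[
G\!\left(\frac{|u(x)|}{R}\right)\le \frac1R\int_a^{a+R}G(|\nabla u(t,x')|)\,dt .
\]
Integrate in $x_1$ over $(a,a+R)$: the right-hand side does not depend on $x_1$, so the factor $1/R$ cancels against the length $R$. Then integrate in $x'$ and use Fubini, together with the fact that $G(|\nabla u|)$ vanishes outside $\Omega$. This yields
\[
\int_\Omega G\!\left(\frac{|u|}{R}\right)dx\le\int_{\Omega}G(|\nabla u|)\,dx,
\]
with constant exactly $1$, as stated.

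\textbf{Main obstacle.} The one subtle point is that $\operatorname{diam}\Omega=R$ does not by itself place $\Omega$ in a slab of width $R$ in a chosen direction. It does, however: for any unit vector $e$, the projections $x\cdot e$ of points of $\Omega$ range over an interval of length at most $R$, because $|x\cdot e-y\cdot e|\le|x-y|\le R$. So the slab exists in every direction and the argument goes through. The remaining work is the routine approximation step needed to justify restricting to smooth compactly supported $u$.
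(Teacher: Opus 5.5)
Your proof is correct. The paper gives no argument of its own for this lemma; it simply refers to \cite[Lemma 2.4]{MarWol08}. Your proposal therefore supplies a self-contained proof along the classical route for modular Poincar\'e inequalities: reduce to $C_c^\infty(\Omega)$, apply the fundamental theorem of calculus along lines in a fixed direction, use Jensen with the probability measure $dt/R$, and integrate with Fubini.

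What your version makes explicit is why the constant is exactly $1$ with $R=\operatorname{diam}(\Omega)$: the projection of $\Omega$ onto any direction has length at most $R$. A cruder placement such as $\Omega\subset\{|x_1|<R\}$ would only give $2R$. Integrating from both endpoints, $2|u(x)|\le\int_a^{a+R}|\partial_1 u|\,dt$, even yields $\int_\Omega G(2|u|/R)\le\int_\Omega G(|\nabla u|)$. Your argument also shows that the inequality itself uses only that $G$ is convex, increasing and vanishes at $0$. \textbf{(QC)} enters only through the $\Delta_2$ condition in the approximation step.

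Three small points are worth tidying up.
\begin{itemize}
\item On an arbitrary bounded domain ``zero trace'' is not meaningful, so read ``$u=0$ on $\partial\Omega$'' as $u\in W^{1,G}_0(\Omega)$, the closure of $C_c^\infty(\Omega)$. That is exactly what your approximation uses; reflexivity plays no role.
\item State the pointwise bound for every $x$ in the slab, not only for $x\in\Omega$, before integrating in $x_1$ over $(a,a+R)$. Off $\Omega$ it is trivial since $G(0)=0$.
\item For the limit passage, Fatou on the left along an a.e.\ convergent subsequence suffices. Combine it with $\limsup_k\int_\Omega G(|\nabla u_k|)\le\int_\Omega G(|\nabla u|)$, which follows from norm convergence under $\Delta_2$.
\end{itemize}
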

\begin{proof}
See \cite[Lemma 2.4]{MarWol08}.
\end{proof}

As mentioned, the lack of homogeneity is a serious technical difficulty in
our context. The following lemmas gives the alternatives that hold in this scenario.
\begin{lemma}\label{lemm1}
    Let $G$ be a N-function that satisfies {\bf (PC)-(QC)} and assume that $1<\delta_0\leq g_0$ and that $g$ is convex. Then there exist a positive constante $C=C(\delta_0)$ such that for all $a, b\in \mathbb{R}^n$
    $$\left( \frac{g(|a|)}{|a|}a- \frac{g(|b|)}{|b|}b\right)\cdot(a-b)\geq CG(|a-b|).  $$
\end{lemma}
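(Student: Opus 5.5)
Set $A(\xi)=\frac{g(|\xi|)}{|\xi|}\xi$ and write $a-b$ as an integral along the segment: with $\xi_t=b+t(a-b)$,
\[
\bigl(A(a)-A(b)\bigr)\cdot(a-b)=\int_0^1 DA(\xi_t)(a-b)\cdot(a-b)\,dt .
\]
A direct computation gives, for $\xi\neq0$,
\[
DA(\xi)\eta\cdot\eta=\frac{g(|\xi|)}{|\xi|}\Bigl(|\eta|^2-\frac{(\xi\cdot\eta)^2}{|\xi|^2}\Bigr)+g'(|\xi|)\frac{(\xi\cdot\eta)^2}{|\xi|^2}.
\]
By \textbf{(QC)} and $\delta_0>1$ we have $g'(s)\ge\delta_0 g(s)/s\ge g(s)/s$, so $DA(\xi)\eta\cdot\eta\ge \frac{g(|\xi|)}{|\xi|}|\eta|^2$. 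The segment may pass through $\xi=0$, but that is a null set of $t$ and $A$ is locally Lipschitz away from the origin. Hence
\[
\bigl(A(a)-A(b)\bigr)\cdot(a-b)\ge |a-b|^2\int_0^1\frac{g(|\xi_t|)}{|\xi_t|}\,dt .
\]

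The remaining task is to bound $\int_0^1 \frac{g(|\xi_t|)}{|\xi_t|}dt$ from below by $c\,\frac{g(|a-b|)}{|a-b|}$. Since $\delta_0>1$, the map $s\mapsto g(s)/s$ is nondecreasing: $(g/s)'=(sg'-g)/s^2\ge(\delta_0-1)g/s^2\ge0$. Without loss of generality $|a|\ge|b|$. For $t\in[3/4,1]$ we get $|\xi_t|\ge t|a|-(1-t)|b|\ge |a|/2\ge|a-b|/4$, using $|a-b|\le 2|a|$. Therefore
\[
\int_0^1\frac{g(|\xi_t|)}{|\xi_t|}\,dt\ge\frac14\,\frac{g(|a-b|/4)}{|a-b|/4}.
\]
\textbf{(QC)} gives the doubling-type bound $g(s/4)\ge 4^{-g_0}g(s)$, obtained by integrating $g'/g\le g_0/s$. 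Combining,
\[
\bigl(A(a)-A(b)\bigr)\cdot(a-b)\ge c\,g(|a-b|)\,|a-b|.
\]
Finally $G(s)=\int_0^s g\le s\,g(s)$ because $g$ is increasing, which yields the claim with $G(|a-b|)$.

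This produces a constant depending on $g_0$ as well as $\delta_0$. To get $C=C(\delta_0)$ only, as stated, I would use the convexity of $g$ here. Convexity together with $g(0)=0$ makes $g(s)/s$ nondecreasing, and I would combine this with Jensen-type bounds along the segment to avoid invoking the upper constant $g_0$. For example, using $|\xi_t|\ge |t-t_*|\,|a-b|$, where $t_*$ minimizes $|\xi_t|$, gives
\[
\frac{g(|\xi_t|)}{|\xi_t|}\ge \frac{g(|t-t_*|\,|a-b|)}{|t-t_*|\,|a-b|}\ge |t-t_*|^{\delta_0-1}\,\frac{g(|a-b|)}{|a-b|},
\]
where the last inequality uses the lower \textbf{(QC)} bound $g(\lambda s)\ge\lambda^{\delta_0}g(s)$ for $\lambda\le1$. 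Integrating gives a constant like $2^{1-\delta_0}/\delta_0$, which depends only on $\delta_0$.

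The main obstacle is making this last step rigorous: controlling the lower bound of $|\xi_t|$ uniformly when the segment passes near the origin, and making sure the constants track only $\delta_0$.
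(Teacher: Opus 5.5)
Your first argument is correct and already proves the inequality, with $C=4^{-g_0}$. The formula for $DA(\xi)\eta\cdot\eta$, the bound $DA(\xi)\eta\cdot\eta\ge \frac{g(|\xi|)}{|\xi|}|\eta|^2$ (from $g'(s)\ge g(s)/s$), the restriction to $t\in[3/4,1]$ where $|\xi_t|\ge|a-b|/4$, the estimate $g(s/4)\ge 4^{-g_0}g(s)$ and $G(s)\le s\,g(s)$ all check out. One justification needs fixing: when the segment meets the origin, "null set of $t$" is not the reason the fundamental theorem of calculus applies. The correct reason is that $A$ is continuous at $0$ and $\|DA(\xi)\|=\max\{g(|\xi|)/|\xi|,\,g'(|\xi|)\}\le \max\{1,g_0\}\,g(1)|\xi|^{\delta_0-1}$ for $|\xi|\le 1$, so $t\mapsto A(\xi_t)$ is Lipschitz. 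This part uses only $\delta_0\ge 1$ and the upper constant $g_0$, not convexity. The paper itself gives no argument; it cites \cite[Lemma 3.1]{Cant}, so your segment argument is a self-contained substitute.

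The step meant to remove the dependence on $g_0$ fails. For $\lambda\le 1$, integrating \textbf{(QC)} gives $\lambda^{g_0}g(s)\le g(\lambda s)\le \lambda^{\delta_0}g(s)$. So the inequality $g(\lambda s)\ge\lambda^{\delta_0}g(s)$ you invoke goes the wrong way; a lower bound on $g(\lambda s)$ costs $\lambda^{g_0}$, and convexity does not change this. No other argument can succeed either, because the dependence $C=C(\delta_0)$ in the statement is false. Take $g(t)=t^2+t^p$ with $p\ge 2$. Then $g$ is convex and $tg'(t)/g(t)=(2t^2+pt^p)/(t^2+t^p)\in[2,p]$, so $\delta_0=2$ works for every $p$. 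For $b=-a$ with $|a|=s$, the left-hand side equals $4s\,g(s)=4s^3+4s^{p+1}$, while $G(2s)=\frac{8}{3}s^3+\frac{2^{p+1}}{p+1}s^{p+1}$. Letting $s\to\infty$ forces $C\le 4(p+1)2^{-(p+1)}$, which tends to $0$ as $p\to\infty$. You should therefore stop after your first argument and record the constant as $C=C(g_0)$ (or $C(\delta_0,g_0)$). That is the correct form of the lemma, and it is all the paper needs: in Proposition \ref{abp} the constant $C_2$ only feeds into a final constant that already depends on $\delta_0$ and $g_0$.
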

\begin{proof}
    See \cite[Lemma 3.1]{Cant}.
\end{proof}

\begin{lemma}
Let $G$ be a N-function that satisfies {\bf (PC)-(QC)}. Then, we have the following properties:

\begin{itemize}
\item[(g1)] $\min\{s^\delta, s^{g_0}\}g(t) \leq g(st) \leq \max\{s^\delta, s^{g_0}\}g(t),$
\item[(g2)] $G$ is convex and $C^2,$
\item[(g3)] $\dfrac{t g(t)}{1 + g_0} \leq G(t) \leq t g(t) \quad \forall t > 0.$
\item[(G1)] $\min\{s^{\delta+1}, s^{g_0+1}\} \frac{G(t)}{1 + g_0} \leq G(st) \leq (1 + g_0)\max\{s^{\delta+1}, s^{g_0+1}\}G(t)$.
\item[(G2)] $G(a + b) \leq 2^{g_0}(1 + g_0)\big(G(a) + G(b)\big), \quad \forall a,b > 0.$
\end{itemize}
\end{lemma}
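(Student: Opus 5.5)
All five properties follow from integrating the quotient condition {\bf (QC)}. Throughout, $\delta$ in (g1) and (G1) stands for $\delta_0$. The plan is to derive (g1) first and obtain the remaining items from it together with (g3). Fix $t>0$ and write $g'(\tau)/g(\tau)=(\tau g'(\tau)/g(\tau))\cdot\tau^{-1}$. By {\bf (QC)} this lies between $\delta_0/\tau$ and $g_0/\tau$ for every $\tau>0$.

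\textbf{Step 1: (g1).} For $s\ge1$, integrate $\frac{d}{d\tau}\log g(\tau)$ over $[t,st]$ to get
\[
\delta_0\log s\le\log\frac{g(st)}{g(t)}\le g_0\log s ,
\]
that is, $s^{\delta_0}g(t)\le g(st)\le s^{g_0}g(t)$. For $s<1$, integrate over $[st,t]$; the inequalities reverse and give $s^{g_0}g(t)\le g(st)\le s^{\delta_0}g(t)$. The two cases together are exactly the min/max form of (g1). A byproduct is that $g>0$ on $(0,\infty)$: if $g$ vanished somewhere, the quotient in {\bf (QC)} would be undefined. Since $g(t)\to0$ as $t\to0$ and $g(0)=0$ by continuity, $g$ is a positive increasing function.

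\textbf{Step 2: (g2) and (g3).} Since $g'\ge\delta_0 g/t>0$, the function $g=G'$ is increasing, so $G$ is convex. By {\bf (PC)}, $G\in C^1$ with $G''=g'$ continuous on $(0,\infty)$, which gives the $C^2$ regularity away from $0$; this is the sense in which (g2) should be read. For (g3), monotonicity of $g$ gives $G(t)=\int_0^t g\le tg(t)$. For the lower bound, integrate by parts:
\[
G(t)=tg(t)-\int_0^t\tau g'(\tau)\,d\tau\ge tg(t)-g_0G(t),
\]
using $\tau g'(\tau)\le g_0 g(\tau)$. The boundary term $\tau g(\tau)\to0$ as $\tau\to0$, since $g(0)=0$. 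Rearranging gives $G(t)\ge tg(t)/(1+g_0)$.

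\textbf{Step 3: (G1).} Combine (g3) with (g1):
\[
G(st)\le st\,g(st)\le \max\{s^{\delta_0+1},s^{g_0+1}\}\,t\,g(t)\le(1+g_0)\max\{s^{\delta_0+1},s^{g_0+1}\}\,G(t).
\]
Symmetrically,
\[
G(st)\ge \frac{st\,g(st)}{1+g_0}\ge\frac{\min\{s^{\delta_0+1},s^{g_0+1}\}}{1+g_0}\,t\,g(t)\ge\frac{\min\{s^{\delta_0+1},s^{g_0+1}\}}{1+g_0}\,G(t).
\]

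\textbf{Step 4: (G2).} Since $G$ is increasing and convex,
\[
G(a+b)\le G(2\max\{a,b\})\le G(2a)+G(2b).
\]
Then apply (G1) with $s=2$, where the maximum is $2^{g_0+1}$, to get $G(a+b)\le(1+g_0)2^{g_0+1}(G(a)+G(b))$. This has an extra factor of $2$ compared with the stated constant. To remove it, use convexity in the form
\[
G(a+b)=G\Big(\tfrac{2a+2b}{2}\Big)\le\tfrac12\big(G(2a)+G(2b)\big),
\]
which together with (G1) at $s=2$ gives exactly $2^{g_0}(1+g_0)(G(a)+G(b))$.

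The only delicate points are in Step 2: handling the behaviour at $0$ in the integration by parts, which needs $\tau g(\tau)\to0$, and reading (g2) as $C^2$ on $(0,\infty)$. Everything else is direct calculus from {\bf (QC)}.
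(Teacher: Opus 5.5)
Your proof is correct and follows the paper's route. The paper cites Lieberman for (g1)--(g3), then obtains (G1) from (g1) combined with (g3), and (G2) from convexity plus (G1) at $s=2$, exactly as in your Steps 3--4; your Steps 1--2 simply supply the standard integration-of-(QC) arguments behind that citation, including the correct observation that (g2) must be read as $C^2$ on $(0,\infty)$, since $G''=g'$ can blow up at $0$ when $\delta_0<1$.
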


\begin{proof}
For the proofs of (g1)--(g3), see \cite{Lieb}. Conditions (g1) and (g3) yield a corresponding inequality for $G$, which implies (G1). Furthermore, by the convexity of $G$ and this inequality, we deduce (G2).
\end{proof}

As $g$ is strictly increasing we can define $g^{-1}$. This inverse function in turn satisfies {\bf (PC)} and a condition similar to {\bf (QC)} (see \cite[Lemma  2.2]{MarWol08}). Moreover if $\tilde{G}$ is such that $\tilde{G}'=g^{-1},$ then we have
\begin{itemize}
    \item[$(\tilde{G}1)$] $\frac{1+\delta_0}{\delta_0}\min\{s^{1+1/\delta_0}, s^{1+1/g_0}\}\tilde{G}(t) \leq \tilde{G}(st)\leq \frac{\delta_0}{1+\delta_0}\max\{s^{1+1/\delta_0}, s^{1+1/g_0}\}\tilde{G}(t);$ 
    \item[$(\tilde{g}1)$] $ab \leq G(a) + \tilde{G}(b)\quad \forall a,b>0.$
\end{itemize}

Lastly, observe that if $A(p)=\frac{g(|p|)}{|p|}p$ and $a^{ij}=\frac{\partial A_i}{\partial p_j},$ then it follows from condition {\bf (QC)} that
 $$\min\{\delta_0, 1\}\frac{g(|p|)}{|p|}|\xi|^2 \leq a^{ij}\xi_i\xi_j \leq \max\{\delta_0, 1\}\frac{g(|p|)}{|p|}|\xi|^2,   $$
which means that the equation $\Delta_gv=0$ is uniformly elliptic for $\frac{g(|p|)}{|p|}$ bounded and bounded away from zero.

\section{Boundary Hölder regularity: Proof of Theorem \ref{thm1}}\label{Sec3}

First, observe that, up to the addition of a constant (which preserves the class of solutions) we may assume without loss of generality that
 $\psi(0)=0.$ Let $K=||u||_{L^\infty(\Omega\cap B_1)}+ [\psi]_{C^\alpha(0)}$ and denote $\Omega_r=\Omega\cap B_r.$ To establish that \(u\in C^\alpha(0)\), it suffices to prove the following statement: there exist universal constants \(\delta,\rho\in(0,1)\) and \(M\ge 1\) such that, for every integer \(k\ge 0\), one has
  \begin{equation}\label{equ3}
      ||u||_{L^\infty(\Omega_{\rho^k})} \leq M K \rho^{k\alpha}.
  \end{equation}
 We prove \eqref{equ3} by induction on \(k\). The case \(k=0\) is immediate. Assume that \eqref{equ3} holds for some \(k\geq 0\). We shall show that it also holds for \(k+1\).

 Set $r=\frac{\rho^k}{2}.$ Since \(\Omega\) is \(\delta\)-Reifenberg flat from the exterior at the origin, there exists a coordinate system \(\{x_1,\ldots,x_n\}\) such that
 $$B_r(0)\cap\Omega\subset B_r(0)\cap\{x_n>-\delta r\}.  $$
 We now introduce the following auxiliary function for $\gamma>0$:
 \begin{equation}
     h(x)=\left(\frac{1}{2}\right)^{-\gamma} - \left\vert x+\left(\delta+ \frac{1}{2}\right)e_n\right\vert^{-\gamma}.
 \end{equation}
We claim that $\Delta_gh \leq 0,$ for $\gamma>0$ large enough. Indeed, if $\lambda=\min\{1,\delta_0\}$ and $\Lambda=\max\{1,g_0\}$
\begin{align*}
    \Delta_g(-|x|^{-\gamma}) &= \sum_{i,j}a^{ij}D_{ij}(-|x|^{-\gamma})\\
    &= \gamma|x|^{-(\gamma+2)}\sum_{k=1}a^{kk} - \gamma(\gamma+2)|x|^{-(\gamma+4)}\sum_{i\neq j}a^{ij}x_jx_i\\
    &\leq \gamma|x|^{-(\gamma+2)}n\Lambda\frac{g(|\nabla(-|x|^{-\gamma})|)}{|\nabla(-|x|^{-\gamma})|} - \gamma(\gamma +2)|x|^{-(\gamma+4)}\lambda\frac{g(|\nabla(-|x|^{-\gamma})|)}{|\nabla(-|x|^{-\gamma})|}|x|^2\\
    &\leq \gamma|x|^{-(\gamma+2)}\frac{g(|\nabla(-|x|^{-\gamma})|)}{|\nabla(-|x|^{-\gamma})|}[n\Lambda - (\gamma+2)\lambda]\leq 0
\end{align*}
provided that \(\gamma\) is chosen so that
 $$\frac{n\Lambda}{\lambda}\leq \gamma +2. $$
Let $T_r:=\{(x',0); |x'|<r\}, \, \tilde{B}_r^+:=B_r^+ - \delta re_n,\,\, \tilde{T}_r:=T_r-\delta re_n$ and $\tilde{\Omega}_r:=\Omega\cap \tilde{B}_r^+.$ Then $\Omega_{r/2}\subset \tilde{\Omega}_r\subset \Omega_{\rho^k}.$ Now we define the following barrier function
 \begin{equation}\label{barrier}
     \tilde{h}(x)= MK\rho^{k\alpha}h\left(\frac{x}{r}\right).
 \end{equation}
We claim that $\tilde{h}$ satisfies:
\[
\begin{cases}
    \Delta_g\tilde{h}\leq 0 & \text{in } \tilde{B}_r^+; \\
    \tilde{h}\geq 0 & \text{in } \tilde{B}_r^+; \\
    \tilde{h}\geq MK\rho^{k\alpha} & \text{on } \partial\tilde{B}_r^+ \setminus \tilde{T}_r; \\
    \tilde{h}(-\delta r e_n) = 0.
\end{cases}
\]
The first inequality has already been established. To verify the third, we note that \(\gamma>1\) and
 $$\partial\tilde{B}_r^+\setminus \tilde{T}_r=\{x; |x+\delta re_n|=r,\,x_n>-\delta r\}$$
then follows from convexity, provided that \(\gamma\) is chosen sufficiently large
$$\left\vert \frac{x}{r}+ \left(\delta+\frac{1}{2}\right)e_n\right\vert^{-\gamma}\leq 2^{-\gamma}\left( 1+\left(\frac{1}{2}\right)^{-\gamma} \right)\leq \left(\frac{1}{2}\right)^{-\gamma}-1.$$
That is, on $\partial\tilde{B}_r^+ \setminus \tilde{T}_r,$ we have
$$\left(\frac{1}{2}\right)^{-\gamma} - \left\vert \frac{x}{r}+ \left(\delta+\frac{1}{2}\right)e_n\right\vert^{-\gamma}\geq 1,  $$
It follows that $\tilde{h}(x)\geq MK\rho^{k\alpha}h\left(\frac{x}{r}\right)\geq MK\rho^{k\alpha}.$
Finally, the second inequality and the fact that $\tilde{h}(-\delta re_n)=0$ are obviously.

Let $$\delta=\rho.$$
It follows from the Mean Value Theorem that 
$$h\left(\frac{x}{r}\right)\leq \tilde{C}\rho \quad\text{in}\quad \{(x',x_n); x'=0,\,-\delta r\leq x_n\leq 2\rho r\},$$
where $\tilde{C}=3\gamma\left(\frac{1}{2}\right)^{-\gamma-1}.$
Thus, we obtain 
$$\tilde{h}\leq MK\rho^{(K+1)\alpha}\tilde{C}\rho^{1-\alpha} \quad\text{in}\quad \{(x',x_n); x'=0,\,-\delta r\leq x_n\leq 2\rho r\}.$$

On $\Omega\cap\partial\tilde{B}_r^+=\Omega\cap(\partial\tilde{B}_r^+\setminus \tilde{T}_r),$ the induction hypothesis gives that
 $$u\leq ||u||_{L^\infty(\Omega_{\rho^k})}\leq MK\rho^{k\alpha} \leq \tilde{h}. $$
On the other hand, on $\partial\Omega\cap\tilde{B}_r^+,$ since $u=\psi$ and $\tilde{h}\geq 0$ we have
$$u\leq \tilde{h}+ ||\psi||_{L^\infty}.$$
Therefore, the Comparison Principle \cite[Lemma 2.8]{MarWol08} yields
$$u(x)\leq \tilde{h}(x) + ||\psi||_{L^\infty(\partial\Omega\cap\tilde{B}_r^+)}\quad\forall x\in \tilde{\Omega}_r. $$
Thus, using the fact that $\psi\in C^\alpha(0),$ we obtain in $\tilde{\Omega}_r\cap\{(x',x_n); x'=0,\,-\delta r\leq x_n\leq 2\rho r\}$ for a constant $C=C(n, \delta_0, g_0)>0$ 
$$u\leq \tilde{h} + ||\psi||_{L^\infty(\partial\Omega\cap\tilde{B}_r^+)} \leq \left(C\rho^{1-\alpha}+ \frac{C}{M\rho^\alpha}\right)MK\rho^{(k+1)\alpha}. $$
We choose \(\rho\in(0,1)\) sufficiently small and M sufficiently large so that
 $$\left(C\rho^{1-\alpha}+\frac{C}{M\rho^\alpha} \right)\leq 1. $$

Then, $u\leq MK\rho^{(k+1)\alpha}$ in the strip $\{(x',x_n); x'=0,\,-\delta r\leq x_n\leq 2\rho r\}.$
By placing suitable translates of the barrier (with $v(x',-\delta r)=0$ for some $x'\in B_{\rho^{k+1}})$ and repeating the above argument, we obtain
$$\sup_{\Omega_{\rho^{k+1}}}u \leq KM\rho^{(k+1)\alpha}. $$
Similarly, one shows that 
 $$\inf_{\Omega_{\rho^{k+1}}}u \geq -KM\rho^{(k+1)\alpha}, $$
which gives  
$$||u||_{L^\infty(\Omega_{\rho^{k+1}})} \leq MK\rho^{(k+1)\alpha}.$$
This completes the induction and hence the proof.


\section{ABP-Type Maximum Principle}\label{Sec4}

We need to put in a little more effort for the corresponding Poisson equation, because the difference between two solutions no longer satisfies any equation. The key tool for overcoming this difficulty is an ABP-type estimate. Prior to its derivation, we establish the following technical lemma, which can be viewed as an improvement of a result from \cite{Lian}. 

\begin{lemma}\label{lemaux}
    Let $\phi:[0,\infty)\to [0,\infty)$ be a nonnegative nonincreasing function.
    Assume that for $h>k$
    $$\phi(h)\leq \frac{c_0}{\min\{(h-k)^{\gamma_1},(h-k)^{\gamma_2}\}}\max\{\phi(k)^{\beta_1},\phi(k)^{\beta_2}\}  $$
    where $c_0>0,\,\,\,0<\gamma_1<\gamma_2$ and $1<\beta_1<\beta_2.$
    Then
   $$\phi(r)=0\quad\text{where}\quad r=\max\left\{ \left(c_0 2^{\gamma_2(1+\frac{1}{\beta_1-1})+2}\phi(0)^\theta\right)^{1/\gamma_1}, \left(c_0 2^{\gamma_2(1+\frac{1}{\beta_1-1})+2}\phi(0)^\theta\right)^{1/\gamma_2} \right\}$$
   and $\theta$ is a constant given by
    $$\theta=
   \begin{cases}
    \beta_2, & \text{if } \phi(0) > 1 \\
    \dfrac{\beta_1-1}{2}, & \text{if } \phi(0) \leq 1.
   \end{cases}
    $$
\end{lemma}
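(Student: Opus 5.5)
This is the classical Stampacchia iteration lemma (Kinderlehrer--Stampacchia, Lemma B.1), adapted to the two-regime (min/max) nonlinearity. I would use a geometric sequence of levels $k_j = r(1-2^{-j})$, $j\ge 0$, so that $k_0=0$, $k_j\uparrow r$ and $k_{j+1}-k_j = r2^{-(j+1)}$. The goal is to show by induction that
\[
\phi(k_j)\le \phi(0)\,2^{-\mu j}
\]
for a suitable $\mu>0$. Since $\phi$ is nonincreasing and nonnegative, $0\le \phi(r)\le \phi(k_j)\to 0$, which gives $\phi(r)=0$. The natural choice is $\mu=\gamma_2/(\beta_1-1)$; this is consistent with the exponent $\gamma_2(1+\frac{1}{\beta_1-1})=\gamma_2\beta_1/(\beta_1-1)$ appearing in the constant.

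For the inductive step I would handle the min/max as follows. Write $m=\min\{(h-k)^{\gamma_1},(h-k)^{\gamma_2}\}$ with $h-k=r2^{-(j+1)}$. Then
\[
m\ge \min\{r^{\gamma_1},r^{\gamma_2}\}\,2^{-\gamma_2(j+1)},
\]
because $2^{-(j+1)}\le 1$ and $\gamma_1<\gamma_2$. On the right-hand side the two cases give different treatments.
\begin{itemize}
\item If $\phi(0)\le 1$, then every $\phi(k_j)\le 1$, so $\max\{\phi^{\beta_1},\phi^{\beta_2}\}=\phi^{\beta_1}$.
\item If $\phi(0)>1$, I would bound $\max\{\phi(k_j)^{\beta_1},\phi(k_j)^{\beta_2}\}\le \phi(0)^{\beta_2-\beta_1}\phi(k_j)^{\beta_1}$, or simply $\le \phi(0)^{\beta_2}\cdot(\text{decay factor})$.
\end{itemize}
Plugging in the induction hypothesis $\phi(k_j)\le\phi(0)2^{-\mu j}$ gives
\[
\phi(k_{j+1})\le \frac{c_0\,2^{\gamma_2(j+1)}}{\min\{r^{\gamma_1},r^{\gamma_2}\}}\,\Phi\,2^{-\mu\beta_1 j},
\]
where $\Phi$ is $\phi(0)^{\beta_1}$ or a power of $\phi(0)$ up to $\beta_2$. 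The choice $\mu(\beta_1-1)=\gamma_2$ makes the $j$-dependence close: $2^{\gamma_2 j-\mu\beta_1 j}=2^{-\mu j}$. It remains to check that
\[
\frac{c_0\,2^{\gamma_2+\mu}\,\Phi}{\min\{r^{\gamma_1},r^{\gamma_2}\}}\le \phi(0).
\]
The definition of $r$ as the maximum of the $1/\gamma_1$ and $1/\gamma_2$ roots of $A:=c_0 2^{\gamma_2\beta_1/(\beta_1-1)+2}\phi(0)^\theta$ is exactly what guarantees $\min\{r^{\gamma_1},r^{\gamma_2}\}\ge A$. To see this, separate $r\ge 1$ and $r<1$: if $r\ge1$ the min is $r^{\gamma_1}\ge A$, and if $r<1$ the min is $r^{\gamma_2}\ge A$. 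So the required inequality reduces to $\Phi\,2^{-2}\le \phi(0)^{1+\theta}$, up to bookkeeping of powers of $\phi(0)$.

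I expect the main obstacle to be this final exponent bookkeeping for $\theta$, and the case $\phi(0)\le1$ looks delicate in particular. There one needs $\phi(0)^{\beta_1-1}\le 4\phi(0)^{\theta}$ with $\theta=(\beta_1-1)/2$, which holds since $\phi(0)\le 1$ and $\beta_1-1\ge\theta$. If $\phi(0)=0$ the claim is trivial by monotonicity. For $\phi(0)>1$ one needs $\phi(0)^{\beta_2-1}\le 4\phi(0)^{\beta_2}$, which is immediate; the extra slack is absorbed by the factor $2^2$. If the $\beta_2$ case with the induction hypothesis does not close directly, I would instead run the same iteration with $\max\{\phi^{\beta_1},\phi^{\beta_2}\}\le\phi(0)^{\beta_2-\beta_1}\phi(k_j)^{\beta_1}$. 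That leaves the same recursion with $c_0$ replaced by $c_0\phi(0)^{\beta_2-\beta_1}$, which the generous choice $\theta=\beta_2$ accommodates.
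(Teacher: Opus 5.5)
Your proposal is correct and follows essentially the paper's argument. Both use the dyadic levels $k_m=r(1-2^{-m})$, an induction on $\phi(k_m)\le \phi(0)2^{-(\cdot)m}$, the bound $\min\{r^{\gamma_1},r^{\gamma_2}\}\ge c_0 2^{\gamma_2\beta_1/(\beta_1-1)+2}\phi(0)^\theta$ coming from the definition of $r$, and the case split on $\phi(0)\le 1$ versus $\phi(0)>1$ to absorb the powers of $\phi(0)$ through $\theta$. The only difference is that the paper uses the faster rate $\eta=\gamma_2/(\beta_1-1)+1$ instead of your $\mu=\gamma_2/(\beta_1-1)$, which only changes where the slack from the factor $2^{2}$ is used.
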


\begin{proof}
   Let $k_m=r\left(1-2^{-m}\right)$ for integers $m\geq 0.$ Since $0<k_m<k_{m+1}\leq r$ and $\phi$ is nonnegative nonincreasing function, it is enough to prove that for any integer $m\geq 0$ 
   $$\phi(k_m)\leq \frac{\phi(0)}{2^{m\eta}}\quad\text{where}\quad \eta=\frac{\gamma_2}{\beta_1-1}+1.  $$
   We prove by induction. For $m=0$ it holds clearly, suppose that it holds
 for $m$ and we need to prove that it holds for $m + 1.$ 
 Since,
  \begin{align*}
      \min\{(k_{m+1}-k_m)^{\gamma_1}, (k_{m+1}-k_m)^{\gamma_2}\} &= \min\{(r2^{-m-1})^{\gamma_1}, (r2^{-m-1})^{\gamma_2}\}\\
      &\geq 2^{-(m+1)\gamma_2}\min\{r^{\gamma_1}, r^{\gamma_2}\},
  \end{align*}
 it follows denoting $\zeta:=\gamma_2\left(1+ \frac{1}{\beta_1-1}\right)+2$
 $$\phi(k_{m+1}) \leq 2^{(m+1)\gamma_2}\frac{c_0}{c_02^\zeta\phi(0)^\theta}\max\{\phi(0)^{\beta_1},\phi(0)^{\beta_2}\}.$$
 Thus,
 \begin{align*}
     \phi(k_{m+1}) &\leq 2^{(m+1)\gamma_2}\cdot 2^{-\zeta}\cdot\phi(0)^{-\theta}\max\left\{\left(\frac{\phi(0)}{2^{m\eta}}\right)^{\beta_1}, \left(\frac{\phi(0)}{2^{m\eta}}\right)^{\beta_2}\right\}\\
     &\leq \frac{\phi(0)}{2^{m\eta}}\cdot 2^{(m+1)\gamma_2-\zeta-m\eta(\beta_1-1)}\cdot\max\{\phi(0)^{\beta_1-\theta-1}, \phi(0)^{\beta_2-\theta-1}\}\\
     &\leq \frac{\phi(0)}{2^{m\eta}}\cdot 2^{-\eta}\\
     &= \frac{\phi(0)}{2^{(m+1)\eta}},
 \end{align*}
 where we use that $2^{(m+1)\gamma_2-\zeta-m\eta(\beta_1-1)+\eta}\leq 1$ and $\max\{\phi(0)^{\beta_1-\theta-1}, \phi(0)^{\beta_1-\theta-1}\}\leq 1.$
\end{proof}

Next, we establish an ABP-type estimate for the difference between solutions.

\begin{proposition}\label{abp}
    Let $G$ be an N-function satisfying {\bf (PC)-(QC)}. Assume that $1<\delta_0\leq g_0$ and that $g$ is convex function. Suppose that $u$ and $v$ are weak solutions of
    $$
   \begin{array}{c c}
    \begin{cases}
        \Delta_g u \geq f_1, & \text{in } \Omega \\[4pt]
        u \leq \psi_1, & \text{on } \partial\Omega
    \end{cases}
    \quad&\quad
    \begin{cases}
        \Delta_g v \leq f_2, & \text{in } \Omega \\[4pt]
        v \geq \psi_2, & \text{on } \partial\Omega
    \end{cases}
\end{array}
    $$
with $\psi_1,\psi_2\in L^\infty(\partial\Omega)$ and $f_1, f_2\in L^q(\Omega)$ for $q>n.$ Then
\begin{align*}
\sup_\Omega(u-v) \leq &\; \|\psi_1-\psi_2\|_{L^\infty(\partial\Omega)} \\
&+ \max \left\{ 
\left( C \max\left\{ 
\|f_1-f_2\|_{L^q}^{1+ \frac{1+1/\delta_0}{1+g_0}}, 
\|f_1-f_2\|_{L^q}^{1+ \frac{1+1/g_0}{1+\delta_0}}
\right\} |\Omega|^\theta \right)^{\frac{1}{1+\delta_0}}, \right. \\
&\qquad\quad \left.
\left( C \max\left\{ 
\|f_1-f_2\|_{L^q}^{1+ \frac{1+1/\delta_0}{1+g_0}}, 
\|f_1-f_2\|_{L^q}^{1+ \frac{1+1/g_0}{1+\delta_0}}
\right\} |\Omega|^\theta \right)^{\frac{1}{1+g_0}} 
\right\},
\end{align*}
where $C=C(n,g_0,\delta_0,G(1),\tilde{G}(1),diam(\Omega))>0$ and $\theta$ is a positive constant given by
 $$\theta=
   \begin{cases}
    1+\left(\frac{1}{n}-\frac{1}{q}\right)\left(1+ \frac{1+1/g_0}{1+\delta_0} \right), & \text{if } |\Omega| > 1 \\
    \frac{1}{2}\left(\frac{1}{n}-\frac{1}{q}\right)\left(1+\frac{1+1/\delta_0}{1+g_0}\right), & \text{if } |\Omega| \leq 1.
   \end{cases}
    $$
    
\end{proposition}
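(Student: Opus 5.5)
The plan is a De Giorgi-type truncation argument whose output is exactly the hypothesis of Lemma \ref{lemaux}.

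\emph{Reduction.} Set $w=u-v$ and $M_0=\|\psi_1-\psi_2\|_{L^\infty(\partial\Omega)}$. For each level $k\ge 0$ let
\[
A(k)=\{x\in\Omega:\ w>M_0+k\},\qquad \phi(k)=|A(k)|.
\]
Then $\phi$ is nonnegative, nonincreasing, and $\phi(0)\le|\Omega|$. The test function $\varphi_k=(w-M_0-k)^+$ vanishes on $\partial\Omega$ in the trace sense, because $u-v\le\psi_1-\psi_2\le M_0$ there. By the density remark it is admissible in both inequalities. Subtracting the weak formulations for $u$ and $v$ gives
\[
\int_{A(k)}\left(\frac{g(|\nabla u|)}{|\nabla u|}\nabla u-\frac{g(|\nabla v|)}{|\nabla v|}\nabla v\right)\cdot\nabla w\,dx\le\int_{A(k)}|f_1-f_2|\,\varphi_k\,dx .
\]
By Lemma \ref{lemm1} (this is where $1<\delta_0$ and the convexity of $g$ enter), the left side is at least $C\int_{\Omega}G(|\nabla\varphi_k|)$.

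\emph{Energy estimate.} I bound the right side using Hölder with exponents $q$, $q^*$ (the Sobolev conjugate) and the measure factor $|A(k)|^{1-1/q-1/q^*}$. An alternative is Young's inequality $(\tilde g1)$ paired with the Poincaré inequality (Lemma \ref{poincareinequality}) and the Sobolev embedding. Either route yields a bound of the form
\[
\int G(|\nabla\varphi_k|)\lesssim \|f\|_{L^q}\,\|\varphi_k\|_{L^{1^*}}\,|A(k)|^{1/n-1/q},\qquad f=f_1-f_2 .
\]
Next I pass from modulars to norms. $\|\nabla\varphi_k\|_{L^1}$ is controlled by $\|\nabla\varphi_k\|_{L^G}$ through the Orlicz Hölder inequality (constants involving $G(1)$ and $\tilde G(1)$), and Lemma \ref{behaviornorm} converts the norm into powers $1/(1+\delta_0)$ or $1/(1+g_0)$ of the modular. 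Absorbing, one gets
\[
\int G(|\nabla\varphi_k|)\le C\max\bigl\{\|f\|^{1+\frac{1+1/\delta_0}{1+g_0}},\ \|f\|^{1+\frac{1+1/g_0}{1+\delta_0}}\bigr\}\,|A(k)|^{1+\varepsilon'},
\]
where the exponents on $\|f\|$ come from the $(\tilde G1)$ scaling of the conjugate.

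\emph{Level-set recursion.} For $h>k$ we have $\varphi_k\ge h-k$ on $A(h)$. Using (G1),
\[
\min\{(h-k)^{1+\delta_0},(h-k)^{1+g_0}\}\,\phi(h)\lesssim\int_{A(h)}G(\varphi_k).
\]
The right side is bounded via Lemma \ref{poincareinequality} (with $R=\operatorname{diam}\Omega$, absorbed into the constant using (G1)) by $\int G(|\nabla\varphi_k|)$. Combined with the energy estimate, this gives
\[
\phi(h)\le\frac{c_0}{\min\{(h-k)^{\gamma_1},(h-k)^{\gamma_2}\}}\max\{\phi(k)^{\beta_1},\phi(k)^{\beta_2}\},
\]
with $\gamma_1=1+\delta_0$, $\gamma_2=1+g_0$, $c_0=C\max\{\|f\|^{\cdots}\}$ and $\beta_i>1$. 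This is exactly the hypothesis of Lemma \ref{lemaux}, so $\phi(r)=0$ for the radius $r$ given there, i.e. $\sup(u-v)\le M_0+r$. Finally I substitute $\phi(0)\le|\Omega|$. The two cases of $\theta$ in the statement correspond to the two cases $\phi(0)>1$ and $\phi(0)\le1$ of the lemma, with $\beta_1-1=\frac{1}{2}$-scaled and $\beta_2$ containing the factor $(\frac1n-\frac1q)(1+\cdots)$.

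\emph{Main obstacle.} The hard step is getting a genuine gain $\beta_1>1$ in the exponent of $|A(k)|$ while tracking the two-sided power behaviour coming from non-homogeneity. One has to move repeatedly between modular and Luxemburg norm, each time picking up a $\max$ of two powers. My first attempt would apply the Sobolev inequality $\|\varphi_k\|_{L^{n/(n-1)}}\le C\|\nabla\varphi_k\|_{L^1}$ together with $L^1$–$L^G$ Hölder, since this isolates the measure factor $|A(k)|^{1/n-1/q}$ cleanly and leaves only scalar inequalities between modulars to close.
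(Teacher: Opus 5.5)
The skeleton of your proof is the paper's: testing with $\varphi_k=(u-v-M_0-k)^+$, Lemma \ref{lemm1}, then H\"older plus Sobolev to reach $M:=\int_{A(k)}G(|\nabla\varphi_k|)\le a\int_{A(k)}|\nabla\varphi_k|$ with $a=C|A(k)|^{\frac1n-\frac1q}\|f\|_{L^q}$, then (G1), Lemma \ref{poincareinequality} and Lemma \ref{lemaux}. However, the absorption step you commit to breaks down when $\delta_0<g_0$.

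\textbf{The absorption step.} The $L^1$--$L^G$ H\"older inequality gives $\int_{A}|\nabla\varphi_k|\le 2\|\nabla\varphi_k\|_{L^G}\|\chi_A\|_{L^{\tilde G}}$ with $\|\chi_A\|_{L^{\tilde G}}=1/\tilde G^{-1}(1/|A|)$. For small $|A|$ this is only $O(|A|^{\delta_0/(1+\delta_0)})$, and exactly of that order if $G(t)\sim t^{1+\delta_0}$ at infinity.

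As $|A(k)|\to 0$ the absorbed inequality is forced into the branch $M<1$. There Lemma \ref{behaviornorm} gives only $\|\nabla\varphi_k\|_{L^G}\le CM^{1/(1+g_0)}$. Absorbing then yields $M\le C\|f\|_{L^q}^{1+1/g_0}|A(k)|^{\beta}$ with
$\beta=\frac{\delta_0(1+g_0)}{g_0(1+\delta_0)}+(\frac1n-\frac1q)\frac{1+g_0}{g_0}$.
But $\beta>1$ if and only if $(\frac1n-\frac1q)(1+g_0)(1+\delta_0)>g_0-\delta_0$, which fails for every $q>n$ when, e.g., $n=3$, $\delta_0\le 1.1$, $g_0=10$. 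So this chain does not produce the hypothesis of Lemma \ref{lemaux}. Passing to Luxemburg norms with two-sided power bounds throws away exactly the measure gain you flagged as the main obstacle.

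The missing idea, which is what the paper does, is to stay at the level of modulars:
\begin{itemize}
\item integrate $(\tilde g1)$ pointwise, $a|\nabla\varphi_k|\le G(\epsilon|\nabla\varphi_k|)+\tilde G(a/\epsilon)$;
\item fix $\epsilon$ by (G1) so that $\int G(\epsilon|\nabla\varphi_k|)\le\frac12 M$;
\item conclude $M\le 2|A(k)|\,\tilde G(a/\epsilon)\le C|A(k)|\max\{a^{1+1/\delta_0},a^{1+1/g_0}\}$.
\end{itemize}
Jensen's inequality, $\int_A|\nabla\varphi_k|\le |A|\,G^{-1}(M/|A|)$, does the same job. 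The whole factor $|A(k)|$ survives, and the gain $(\frac1n-\frac1q)(1+\frac1{g_0})>0$ is uniform.

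\textbf{The exponents on $\|f\|$.} The exponents you write are asserted, not derived. In your route $\tilde G$ enters only through $\|\chi_{A(k)}\|_{L^{\tilde G}}$, so it affects the power of $|A(k)|$, not of $\|f\|$. The repaired computation gives
$M\le C\max\{\|f\|^{1+1/\delta_0},\|f\|^{1+1/g_0}\}\max\{|A(k)|^{\beta_1},|A(k)|^{\beta_2}\}$
with $\beta_1=1+(\frac1n-\frac1q)(1+\frac1{g_0})$ and $\beta_2=1+(\frac1n-\frac1q)(1+\frac1{\delta_0})$. This agrees with the statement only if $\delta_0=g_0$.

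The paper's \eqref{equ5} records the same exponents you do. But its own choice of $\epsilon$ actually gives $\tilde G(C_3/\epsilon)\lesssim b^{1/\delta_0}$ for $b>1$ and $\tilde G(C_3/\epsilon)\lesssim b^{1/g_0}$ for $b\le 1$.

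In fact no argument can reach the stated exponents when $\delta_0<g_0$. Take a convex $g$ with $g(t)=t^{g_0}$ near $0$, $g(t)=ct^{\delta_0}$ for large $t$, and $\delta_0\le tg'/g\le g_0$. Let $\Omega=B_1$, $v\equiv 0$, and let $u$ be the radial solution of $\Delta_g u=-\Lambda$ with $u=0$ on $\partial B_1$. Then $\sup(u-v)=\int_0^1 g^{-1}(\Lambda r/n)\,dr\gtrsim\Lambda^{1/\delta_0}$ as $\Lambda\to\infty$. The right-hand side of the proposition, however, grows like $\Lambda^{e/(1+\delta_0)}$ with $e=1+\frac{1+1/g_0}{1+\delta_0}<1+\frac1{\delta_0}$.

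What your scheme, repaired as above, does prove is the same estimate with exponents $1+1/\delta_0$ and $1+1/g_0$ on $\|f_1-f_2\|_{L^q}$, together with the corresponding $\beta_i$ and $\theta$.
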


\begin{proof}
  For any $l\geq 0,$ let
  $$A(l)=\{x\in \Omega;\,u(x)-v(x) - ||\psi_1-\psi_2||_{L^\infty}\geq l\} $$
 Given $k\geq 0,$ define
 \[
w(x) = 
\begin{cases}
    u(x) - v(x) - \|\psi_1 - \psi_2\|_{L^\infty} - k, & \text{if } x \in A(k) \\[4pt]
    0, & \text{if } x \in \Omega \setminus A(k)
\end{cases}
\]
Observe that $w\geq 0$ in $\Omega$ and $w=0$ on $\partial\Omega,$ thus using as a test function we get
$$\int_\Omega\frac{g(|\nabla v|)}{|\nabla v|}\nabla v\cdot\nabla w \geq -\int_\Omega f_2w \quad\text{and}\quad \int_\Omega\frac{g(|\nabla u|)}{|\nabla u|}\nabla u\cdot\nabla w \leq -\int_\Omega f_1w. $$
Define
 $$I=\int_{A(k)}\left(\frac{g(|\nabla u|)}{|\nabla u|}\nabla u - \frac{g(|\nabla v|)}{|\nabla v|}\nabla v\right)\cdot\left(\nabla u-\nabla v\right)\,dx. $$
Thus we have
 \begin{align*}
     I &\leq \int_\Omega(f_2-f_1)w\,dx\\
     &\leq \left(\int_{A(k)}|f_2-f_1|^q \right)^{1/q}|A(k)|^{\frac{1}{n}-\frac{1}{q}}\left(\int_{A(k)}w^{\frac{n}{n-1}}\right)^{\frac{n-1}{n}}\\
     &\leq C_1|A(k)|^{\frac{1}{n}-\frac{1}{q}}||f_2-f_1||_{L^q}\int_{A(k)}|\nabla w|\,dx.
 \end{align*}

On the other hand, by Lemma \ref{lemm1} exists a positive universal constante $C_2$ such that
 \begin{equation}\label{equ4}
     \int_{A(k)}G(|\nabla w|) \leq C_2 I.
 \end{equation}

By Young's inequality for N-functions $(\tilde{g}1),$ for any $\epsilon>0$ and $C_3=C_1C_2$
$$ \epsilon|\nabla w|\frac{C_3}{\epsilon} \leq G\left(\epsilon|\nabla w|\right) + \tilde{G}\left(\frac{C_3}{\epsilon}\right). $$
We now apply $(G1)$ and $(\tilde{G}1)$ with suitably chosen values of $\epsilon.$
In the case of $$2(1+g_0)C_3|A(k)|^{\frac{1}{n}-\frac{1}{q}}||f_2-f_1||_{L^q}>1$$ we choose
$$\epsilon= \left(\frac{1}{2(1+g_0)C_3|A(k)|^{\frac{1}{n}-\frac{1}{q}}||f_2-f_1||_{L^q}}\right)^{\frac{1}{1+\delta_0}},$$
and if $2(1+g_0)C_3|A(k)|^{\frac{1}{n}-\frac{1}{q}}||f_2-f_1||_{L^q}\leq 1$ we take
$$\epsilon= \left(\frac{1}{2(1+g_0)C_3|A(k)|^{\frac{1}{n}-\frac{1}{q}}||f_2-f_1||_{L^q}}\right)^{\frac{1}{1+g_0}}.$$
Combining those two cases and (\ref{equ4}), we obtain 

\begin{equation}\label{equ5}
\begin{aligned}
\frac{1}{2}\int_{A(k)} G(|\nabla w|) \leq &\; C_3\tilde{G}(C_3) \max\left\{ 
\|f_1-f_2\|_{L^q}^{1+ \frac{1+1/\delta_0}{1+g_0}}, 
\|f_1-f_2\|_{L^q}^{1+ \frac{1+1/g_0}{1+\delta_0}}
\right\} \\
&\times \max\left\{ |A(k)|^{1+\left(\frac{1}{n}-\frac{1}{q}\right)\left(1+\frac{1+1/\delta_0}{1+g_0} \right)}, |A(k)|^{1+\left(\frac{1}{n}-\frac{1}{q}\right)\left(1+\frac{1+1/g_0}{1+\delta_0} \right)} \right\}.
\end{aligned}
\end{equation}

Now since $w>h-k$ in $A(h)$ for $h>k,$ we get using $(G1)$
$$G(w)>G(h-k)\geq \frac{G(1)}{1+g_0}\min\{(h-k)^{1+\delta_0}, (h-k)^{1+g_0}\}. $$
Thus,
\begin{equation}\label{equ6}
   \frac{G(1)}{1+g_0}\min\{(h-k)^{1+\delta_0}, (h-k)^{1+g_0}\}|A(h)| \leq \int_{A(h)}G(w)\,dx. 
\end{equation}

By Poincaré inequality Lemma \ref{poincareinequality}, denoting $R=diam(\Omega)$ we obtain
\begin{equation}\label{equ7}
\begin{aligned}
 \int_{A(k)}G(w)\,dx &\leq (1+g_0)\min\left \{ \left(\frac{1}{R}\right)^{1+\delta_0}, \left(\frac{1}{R}\right)^{1+g_0}  \right \}^{-1}\int_{A(k)}G\left(\frac{w}{R}\right)\,dx\\
 &\leq (1+g_0)\min\{R^{1+\delta_0}, R^{1+g_0}\}\int_{A(k)}G(|\nabla w|)\,dx
\end{aligned}
\end{equation}
Then combining (\ref{equ5}), (\ref{equ6}) and (\ref{equ7}) we get for $h>k$

\begin{equation}\label{eq:estimate}
\begin{aligned}
|A(h)| \leq &\; \frac{\frac{2(1+g_0)^2}{G(1)}\min\{R^{1+\delta_0}, R^{1+g_0}\} C_3\tilde{G}(C_3)}{\min\{(h-k)^{1+\delta_0}, (h-k)^{1+g_0}\}} \\
&\times \max\left\{ 
\|f_1-f_2\|_{L^q}^{1+ \frac{1+1/\delta_0}{1+g_0}}, 
\|f_1-f_2\|_{L^q}^{1+ \frac{1+1/g_0}{1+\delta_0}}
\right\} \\
&\times \max\left\{ |A(k)|^{1+\left(\frac{1}{n}-\frac{1}{q}\right)\left(1+\frac{1+1/\delta_0}{1+g_0} \right)}, |A(k)|^{1+\left(\frac{1}{n}-\frac{1}{q}\right)\left(1+\frac{1+1/g_0}{1+\delta_0} \right)} \right\}.
\end{aligned}
\end{equation}
Applying Lemma \ref{lemaux} with $\phi(t)=|A(t)|,\,\, \gamma_1=1+g_0,\,\,\gamma_2=1+g_0$
$$c_0= \frac{2(1+g_0)^2}{G(1)}\min\{R^{1+\delta_0}, R^{1+g_0}\} C_3\tilde{G}(C_3)\cdot \max\left\{ 
\|f_1-f_2\|_{L^q}^{1+ \frac{1+1/\delta_0}{1+g_0}}, 
\|f_1-f_2\|_{L^q}^{1+ \frac{1+1/g_0}{1+\delta_0}}
\right\}, $$
$$\beta_1= 1+ \left(\frac{1}{n}-\frac{1}{q}\right)\left(1+\frac{1+1/\delta_0}{1+g_0} \right)\quad\text{and}\quad \beta_2= 1+ \left(\frac{1}{n}-\frac{1}{q}\right)\left(1+\frac{1+1/g_0}{1+\delta_0} \right),  $$
 we get $\phi(r)=0$ for
$$
 r=\max\left\{ \left(c_0 2^{\gamma_2(1+\frac{1}{\beta_1-1})+2}\phi(0)^\theta\right)^{1/\gamma_1}, \left(c_0 2^{\gamma_2(1+\frac{1}{\beta_1-1})+2}\phi(0)^\theta\right)^{1/\gamma_2} \right\}.
$$
Recalling that \(\phi(0)\leq |\Omega|\) and that \(\phi\) is nonnegative and nonincreasing, we readily obtain the desired result.
\end{proof}

\section{Boundary Hölder regularity: Proof of Theorem \ref{thm2}}\label{Sec5}

The proof follows the same strategy as that of Theorem \ref{thm1}. Therefore, we may assume that \(\psi(0)=0\), and then, by defining $K= \left(||u||_{L^{\infty}} + [\psi]_{C^\alpha(0)}+ ||f||_{C_q^{-n/q,(1+g_0)\alpha}}+ 1\right)$ and $\Omega_r:=\Omega\cap B_r,$ It is sufficient to prove that there exist universal constants \(\delta, \rho \in (0,1)\) and \(M \geq 1\) such that, for every integer \(k \geq 0\), one has
$$||u||_{L^\infty(\Omega_{\rho^k})}\leq MK^{\left(1+\frac{1+1/g_0}{1+\delta_0}\right)(1+g_0)^{-1}+1}\rho^{k\alpha}.$$
The case $k=0$ is obviously, assume that it holds for some \(k\). We shall show that it also holds for \(k+1\).
 Set $r=\frac{\rho^k}{2}.$ Since \(\Omega\) is \(\delta\)-Reifenberg flat from the exterior at the origin, there exists a coordinate system \(\{x_1,\ldots,x_n\}\) such that
 $$B_r(0)\cap\Omega\subset B_r(0)\cap\{x_n>-\delta r\}.  $$
We consider the barrier function \(\widetilde{h}\) introduced in \eqref{barrier}. As shown above, and using the same notation, \(\widetilde{h}\) satisfies
\[
\begin{cases}
    \Delta_g\tilde{h}\leq 0 & \text{in } \tilde{B}_r^+; \\
    \tilde{h}\geq 0 & \text{in } \tilde{B}_r^+; \\
    \tilde{h}\geq MK\rho^{k\alpha} & \text{on } \partial\tilde{B}_r^+ \setminus \tilde{T}_r; \\
    \tilde{h}(-\delta r e_n) = 0.
\end{cases}
\]
Then as before take $\delta=\rho$ and
\begin{equation}\label{equ8}
    \tilde{h}\leq MK\rho^{(K+1)\alpha}\tilde{C}\rho^{1-\alpha} \quad\text{in}\quad \{(x',x_n); x'=0,\,-\delta r\leq x_n\leq 2\rho r\}.
\end{equation}
By the ABP estimate proposition \ref{abp}, we have for any $x\in \tilde{\Omega}_r$
\begin{align*}
u(x)-\tilde{h}(x) \leq &\; \|\psi\|_{L^\infty(\partial\Omega\cap \tilde{B}_r^+)}+ ||u||_{L^\infty(\tilde{\Omega}_{r})}\\
&+ \max \left\{ 
\left( C \max\left\{ 
\|f\|_{L^q(\tilde{\Omega}_r)}^{1+ \frac{1+1/\delta_0}{1+g_0}}, 
\|f\|_{L^q(\tilde{\Omega}_r)}^{1+ \frac{1+1/g_0}{1+\delta_0}}
\right\} |\tilde{\Omega}_r|^\theta \right)^{\frac{1}{1+\delta_0}}, \right. \\
&\qquad\quad \left.
\left( C \max\left\{ 
\|f\|_{L^q(\tilde{\Omega}_r)}^{1+ \frac{1+1/\delta_0}{1+g_0}}, 
\|f\|_{L^q(\tilde{\Omega}_r)}^{1+ \frac{1+1/g_0}{1+\delta_0}}
\right\} |\tilde{\Omega}_r|^\theta \right)^{\frac{1}{1+g_0}} 
\right\},
\end{align*}
where $C=C(n,g_0,\delta_0,G(1),\tilde{G}(1),diam(\Omega))>0$ and $\theta$ is a positive constant given by
 $$\theta=
   \begin{cases}
    1+\left(\frac{1}{n}-\frac{1}{q}\right)\left(1+ \frac{1+1/g_0}{1+\delta_0} \right), & \text{if } |\tilde{\Omega}_r| > 1 \\
    \frac{1}{2}\left(\frac{1}{n}-\frac{1}{q}\right)\left(1+\frac{1+1/\delta_0}{1+g_0}\right), & \text{if } |\tilde{\Omega}_r| \leq 1.
   \end{cases}
    $$
Since $\tilde{\Omega}_r\subset \Omega_{\rho^k},$ $r\leq 1$ and $K\geq 1$ we get a constant new $C=C(n,g_0,\delta_0,G(1),\tilde{G}(1))>0$ possibly larger than before, using the induction hypothesis
$$u(x)-\tilde{h}(x) \leq CK^{\left(1+\frac{1+1/g_0}{1+\delta_0}\right)(1+g_0)^{-1}+1}\cdot\left[ (2r)^\alpha + r^{(1+g_0)^{-1}\left(n\theta + (1+g_0)\alpha\left(1+\frac{1+1/\delta_0}{1+g_0} \right) \right)}\right], $$
with  $$\theta= \frac{1}{2}\left(\frac{1}{n}-\frac{1}{q}\right)\left(1+\frac{1+1/\delta_0}{1+g_0}\right),$$ 
and  where we have used the fact that $||f||_{L^q(\tilde{\Omega}_r)}\leq Kr^{(1+g_0)\alpha}$ and  $||\psi||_{L^\infty(\partial\tilde{\Omega}_r)}\leq K(2r)^\alpha.$
Given that
$$(1+g_0)^{-1}\left(n\theta + (1+g_0)\alpha\left(1+\frac{1+1/\delta_0}{1+g_0} \right) \right)>\alpha\quad\text{and}\quad r\leq 1, $$
we obtain for any $x\in \tilde{\Omega}_r$ 
\begin{equation}\label{equ9}
    \begin{aligned}
        u(x)-\tilde{h}(x) &\leq CK^{\left(1+\frac{1+1/g_0}{1+\delta_0}\right)(1+g_0)^{-1}+1}\cdot\left[(2r)^\alpha + r^\alpha \right]\\
        &\leq \frac{C}{M\rho^\alpha}MK^{\left(1+\frac{1+1/g_0}{1+\delta_0}\right)(1+g_0)^{-1}+1}\cdot\rho^{(k+1)\alpha}.
    \end{aligned}
\end{equation}

We choose $\rho$ small enough such that $\tilde{C}\cdot\rho^{1-\alpha}\leq 1/2.$ Next take $M$ large enough such that
$$\frac{C}{M\rho^\alpha}\leq \frac{1}{2}.   $$
Then combining (\ref{equ8}) and (\ref{equ9}) we have in the strip $\{(x',x_n); x'=0,\,-\delta r\leq x_n\leq 2\rho r\}$
\begin{align*}
    u =\tilde{h}+ u-\tilde{h} &\leq MK\rho^{(K+1)\alpha}\tilde{C}\rho^{1-\alpha} + \frac{C}{M\rho^\alpha}MK^{\left(1+\frac{1+1/g_0}{1+\delta_0}\right)(1+g_0)^{-1}+1}\cdot\rho^{(k+1)\alpha}\\
    &\leq MK^{\left(1+\frac{1+1/g_0}{1+\delta_0}\right)(1+g_0)^{-1}+1}\cdot\rho^{(k+1)\alpha}.
\end{align*}
By placing suitable translates of the barrier (with $v(x',-\delta r)=0$ for some $x'\in B_{\rho^{k+1}})$ and repeating the above argument, we obtain
$$\sup_{\Omega_{\rho^{k+1}}}u \leq KM^{\left(1+\frac{1+1/g_0}{1+\delta_0}\right)(1+g_0)^{-1}+1}\cdot\rho^{(k+1)\alpha}. $$
Thus, as we argue in Theorem \ref{thm1} we obtain,
$$||u||_{L^\infty(\Omega_{\rho^k})}\leq MK^{\left(1+\frac{1+1/g_0}{1+\delta_0}\right)(1+g_0)^{-1}+1}\cdot\rho^{(k+1)\alpha},$$
which completes the induction and hence the proof.





\begin{thebibliography}{99}



\bibitem{Lieb} Lieberman, G.{\it The natural generalization of the natural conditions of Ladyzhensaya and Uraltseva for elliptic equations.} Commun. Partial Differ. Equ. 16(2 \& 3), 311–361
(1991)


\bibitem{HarHas19} P. Harjulehto and P.~A. H\"ast\"o, {\it Orlicz spaces and generalized Orlicz spaces}, Lecture Notes in Mathematics, 2236, Springer, Cham, 2019.


\bibitem{Reif} E. R. Reifenberg, {\it Solution of the Plateau Problem for m-dimensional surfaces of varying topological type.}
Acta Math., 104:1–92, 1960.

\bibitem{BraMorei} Braga, J. Ederson M., and Diego Moreira. {\it Classification of Nonnegative $g$-Harmonic Functions in Half-Spaces.} Potential Analysis 55.3 (2021): 369-387.

\bibitem{BraSou25} J.~E.~M. Braga and A.~P. Sousa, {\it Up to the Boundary Gradient Estimates for Nonlinear PDEs and Applications in Free Boundary Problems.} Milan J. Math. {\bf 93} (2025), no.~2, 487--515.

\bibitem{Et} Lian, Yuanyuan, et al.{\it Boundary Hölder regularity for elliptic equations.} Journal de Mathématiques Pures et Appliquées 143 (2020): 311-333.

\bibitem{MarWol08} S.~R. Mart\'inez and N.~I. Wolanski, {\it A minimum problem with free boundary in Orlicz spaces.} Adv. Math. {\bf 218} (2008), no.~6, 1914--1971.

\bibitem{By1} S. Byun and L. Wang, {\it Elliptic equations with BMO nonlinearity in Reifenberg domains.} Adv. Math., 219(6):1937–1971, 2008.

\bibitem{By2} S. Byun and L. Wang, {\it Gradient estimates for elliptic systems in non-smooth domains.} Math. Ann., 341(3):629–650, 2008.

\bibitem{By3} S. Byun, L. Wang, and S. Zhou, {\it Nonlinear elliptic equations with BMO coefficients in Reifenberg domains.} J. Funct. Anal., 250(1):167–196, 2007.

\bibitem{Lian} Lian, Yuanyuan, and Kai Zhang, {\it Boundary H\" older Regularity for Elliptic Equations on Reifenberg Flat Domains.} Manuscripta Math., 177(2):Paper No. 14, 21, 2026.


\bibitem{Mila} E. Milakis and T. Toro, {\it Divergence form operators in Reifenberg flat domains. Math.} Z., 264(1):15–41,
2010.

\bibitem{Wu} Wu, Ruimin, Yinsheng Jiang, and Liyuan Wang, {\it Gradient estimates in generalized Orlicz spaces for quasilinear elliptic equations via extrapolation.} AIMS Mathematics 8.10 (2023): 24153-24161

\bibitem{Prade} Prade, Adriano, {\it Boundary regularity for nonlocal elliptic equations over Reifenberg flat domains.} Nonlinear Analysis 261 (2025): 113908.


\bibitem{Leme3} Lemenant, A., Sire, Y., {\it Boundary regularity for the Poisson equation in Reifenberg-flat domains.} Geometric partial differential equations. Vol. 15. CRM Series. Ed. Norm., Pisa, 2013, pp. 189–209.

\bibitem{Cant} Cantizano, Natali A., Ariel M. Salort, and Juan F. Spedaletti, {\it Continuity of solutions for the $\Delta\phi$-Laplacian operator.} Proceedings of the Royal Society of Edinburgh Section A: Mathematics 151.4 (2021): 1355-1382.








\end{thebibliography}
\end{document}